\newtheorem{theorem}{Theorem}
\theoremstyle{plain}
\newtheorem{corollary}{Corollary}
\newtheorem{lemma}{Lemma}
\newtheorem{proposition}{Proposition}
\newtheorem{remark}{Remark}
\numberwithin{equation}{section}
\begin{document}
\title[state-closed representations of wreath products]{On state-closed
representations of restricted wreath product of groups of type $%
G_{p,d}=C_{p}wrC^{d}$ }
\author{Alex C. Dantas}
\address{Departamento de Matem\'{a}tica, Universidade de Bras\'{\i}lia, Bras%
\'{\i}lia-DF, Brazil}
\email{alexcdan@gmail.com}
\thanks{The first author acknowledges a doctoral scholarship from CNPq.}
\author{ Said N. Sidki}
\address{Departamento de Matem\'{a}tica, Universidade de Bras\'{\i}lia, Bras%
\'{\i}lia-DF, Brazil}
\email{ssidki@gmail.com}
\thanks{The second author thanks Tatiana Smirnova-Nagnibeda\ for a visit to
Universit\'{e} de Gen\`{e}ve and thanks Dmytro Savchuk for a visit to
Southern Florida University both of which took place in 2014 and stimulated
this work.}
\date{May11th, 2015}
\subjclass[2000]{Primary20E08, 20F18}
\keywords{Tree automorphisms, state-closed representations, wreath products,
lamplighter group.}

\begin{abstract}
Let $G_{p,d}$ be the restricted wreath product $C_{p}wrC^{d}$ where $C_{p}$
is a cyclic group of order a prime $p$ and $C^{d}$ a free abelian group of
finite rank $d$. We study the existence of faithful state-closed (\textit{fsc%
}) representations of $G_{p,d}$ on the $1$-rooted $m$-ary tree for some
finite $m$. The group $G_{2,1}$, known as the lamplighter group, admits an 
\textit{fsc }representation on the binary tree. We prove that for $d\geq 2$
there are no \textit{fsc} representations of $G_{p,d}$ on the $p$-adic tree.
We characterize all \textit{fsc} representations of $\ G=G_{p,1}$ on the $p$%
-adic tree where the first level stabilizer of the image of $G$ contains its
commutator subgroup. Furthermore, for $d\geq 2$, we construct uniformly 
\textit{fsc} representations of $G_{p,d}$ on the $p^{2}$-adic tree and
exhibit concretely the representation of $G_{2,2}$ on the $4$-tree as a
finite-state automaton group.
\end{abstract}

\maketitle

\section{Introduction}

This paper is a study of representations of the restricted wreath product of
groups of type $G_{p,d}=C_{p}wrX$ where $C_{p}$ is a cyclic group of prime
order $p$ and $X$ is a free abelian group of finite rank $d\geq 1$, as
groups of automorphisms of $1$-rooted regular $m$-trees satisfying the 
\textit{state-closed property} (or, self-similarity in dynamics language);
the representations are said to be of degree $m$. \ We let $C_{p}$ be
generated by $a$, denote its the normal closure by $A$ and let $X$ be
generated by $\left\{ x_{1},x_{2},...,x_{d}\right\} $.

The groups $G_{2,d}$ appeared as examples in the study of probabilistic
properties of random walks on groups (\cite{KaimVersh}, page 480). Among
these examples, the group $G_{2,1}$ which goes by the picturesque name of
lamplighter, admits a (\textit{classical}) faithful self-similar
representation on the $2$-tree, as the state-closure of the tree
automorphism $\xi =\left( \xi ,\xi \alpha \right) $ where $\alpha $ is the
transposition automorphism. Further interest in the lamplighter group arose
from the calculation of its spectrum in \cite{GrigZuk} which was then used
to disprove a conjecture about the range of $L_{2}$-Betti numbers of closed
manifolds (see, \cite{GLSZ}). Since then, several articles have appeared on
generalizations of the lamplighter group (\cite{SilvaStein}, \cite{KSS}, 
\cite{BarthSunik}, \cite{GLN}, \cite{BonDaRo}).

An important type of representation of groups as automorphisms of trees is
that of \textit{finite-state; }or\textit{\ }equivalently, representations by 
\textit{finite automata}. It follows from a general technique called
tree-wreathing introduced in \cite{Sidki} that the groups $G_{p,d}$ admit
faithful finite-state representations of degree $p$, independently of $d$.
In contrast, as we will prove, if $d\geq 2$, a necessary condition for the
existence of faithful state-closed representations of $G_{p,d}$ is that the
degree of the representation be a composite number.

State-closed representations of a general group $G$, are constructible from 
\textit{similarity pairs} $\left( H,f\right) $ where $H$ is a subgroup of $G$
of finite index $m$ and $f$ is a homomorphism $H\rightarrow G$ called a 
\textit{virtual endomorphism} of $G$. A similarity pair $(H,f)$ leads to a
recursively defined representation $\varphi $ of $G$ as a group of
automorphisms of the $1$-rooted regular $m$-tree. The image $G^{\varphi }$
is a state-closed group of automorphisms of the tree. The kernel of $\varphi 
$, called the $f$-core of $H$, is the largest subgroup $K$ of $H$ which is
normal in $G$ and $f$-invariant (in the sense $K^{f}\leq K$). When the
kernel of $\varphi $ is trivial, $f$ and the similarity pair $(H,f)$ are
said to be \textit{simple}. A typical example of a group with a simple
similarity pair is that of the free abelian group $X=\left\langle
x_{1},x_{2},...,x_{d}\right\rangle $ of rank $d$ and pair $\left( Y,f\right) 
$ with $Y=\left\langle x_{1}^{p},x_{2},..,x_{d}\right\rangle $ and
homomorphism $f$ which is an extension of $x_{1}^{p}\rightarrow x_{2},$ $%
x_{j}\rightarrow x_{1+j}$ ( $2\leq j\leq d-1$ ), $x_{d}\rightarrow x_{1}$
(see, \cite{NekSid}).

State-closed representations are known for many finitely generated groups
ranging from the torsion groups of Grigorchuk and Gupta-Sidki to free groups 
\cite{Nek}. Furthermore, such representations have been studied for the
family of abelian groups \cite{BruSid}, of finitely generated nilpotent
groups \cite{BerSid}, as well as for arithmetic groups \cite{Kapo}. A useful
software for computation in self similar groups is available in \cite%
{MuntSav}.

Section 2 of this paper is a preliminary analysis of similarity pairs for
groups which are semidirect products $G=AX$ where $A$ is a self-centralizing
abelian normal subgroup. We show how to replace a simple similarity pair $%
\left( H,f\right) $ where $f:A_{0}\left( =A\cap H\right) \rightarrow A$ by a
simple $\left( \dot{H},\dot{f}\right) $ satisfying 
\begin{eqnarray*}
\left[ G:\dot{H}\right] &=&\left[ G:H\right] ,\text{ }\dot{H}=A_{0}Y,\text{ }%
Y=AH\cap X, \\
\dot{f} &:&A_{0}\rightarrow A,Y\rightarrow X\text{.}
\end{eqnarray*}%
In addition, we provide a module theoretic formulation of state-closed
representations of $G$.

In Section 3 we prove that a state-closed representations of $G_{p,d}$,
where $H=A_{0}X$, is faithful only if $d=1$. The exceptional case occurs in
the classical representation of the lamplighter group $G=G_{2,1}$ where, in
addition, $H$ contains the commutator subgroup $G^{\prime }$.

\begin{theorem}
(Nonexistence Result) Let $G_{p,d}=C_{p}wrX$ where $C_{p}=\left\langle
a\right\rangle $ of prime order $p$ and $X$ is a free abelian group of
finite rank $d\geq 2$. Let $A$ be the normal closure of $\left\langle
a\right\rangle $, let $H$ be a subgroup of finite index in $G_{p,d}\left(
=AX\right) $ and $f:H\rightarrow G$ a homomorphism. Suppose $H$ projects
onto $X$ modulo $A$. Then $f$ is not simple.
\end{theorem}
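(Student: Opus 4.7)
The plan is to exploit the module-theoretic reformulation of state-closed representations from Section 2. Using the reduction described there, I may assume $H=A_{0}X$ with $A_{0}=A\cap H$ a $0$-dimensional $\mathbb{F}_{p}X$-submodule of $A$, and that $f$ splits as $f|_{A_{0}}=\alpha$ and $f|_{X}=\beta$, where $\beta:X\to X$ is a group endomorphism and $\alpha:A_{0}\to A$ is $\tilde{\beta}$-semilinear. Here I write $R=\mathbb{F}_{p}[X]=\mathbb{F}_{p}[x_{1}^{\pm 1},\ldots,x_{d}^{\pm 1}]$, identify $A$ with $R$ as an $\mathbb{F}_{p}X$-module, and denote by $\tilde{\beta}:R\to R$ the ring endomorphism induced by $\beta$. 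A nonzero $R$-submodule $J$ of $A_{0}$ with $\alpha(J)\subseteq J$ is automatically normal in $G$ and $f$-invariant, hence contained in the $f$-core; producing such a $J$ proves the theorem.

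First I would dispose of the degenerate cases. Since $\ker\alpha$ is an $R$-submodule of $A_{0}$ annihilated by $\alpha$, I may assume $\alpha$ is injective. If some nontrivial $x$ lies in $\ker\beta$, then $\alpha((x-1)m)=(\beta(x)-1)\alpha(m)=0$ for all $m\in A_{0}$, forcing $(x-1)A_{0}=0$, which contradicts the fact that $R$ is a domain and $A_{0}\neq 0$. Hence $\beta$ is injective, and, as an injection of free abelian groups of rank $d$, its image has some finite index $k$ in $X$; consequently $R$ is free of rank $k$ over $\tilde{\beta}(R)$, and the twisted $R$-module $R^{\tilde{\beta}}$ (meaning $R$ with action $r\cdot m=\tilde{\beta}(r)m$) is isomorphic to $R^{k}$.

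Next I would apply a Hartogs-type extension. Since $R/A_{0}$ is finite, $A_{0}$ is $0$-dimensional, and $R$ being regular (hence Cohen--Macaulay) yields $\mathrm{depth}(A_{0},R)=d\geq 2$, so $\mathrm{Ext}_{R}^{i}(R/A_{0},R)=0$ for $i<d$ and, by additivity in the second argument, the same vanishing holds for $R^{\tilde{\beta}}\cong R^{k}$. The long exact sequence attached to $0\to A_{0}\to R\to R/A_{0}\to 0$ then shows that restriction $\mathrm{Hom}_{R}(R,R^{\tilde{\beta}})\to \mathrm{Hom}_{R}(A_{0},R^{\tilde{\beta}})$ is an isomorphism, so $\alpha$ has the form $\alpha(m)=\tilde{\beta}(m)\,c$ for a unique $c\in R$.

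The final step is geometric. The zero set $V(A_{0})\subseteq (\overline{\mathbb{F}_{p}}^{\times})^{d}$ is finite and consists of tuples of roots of unity, so $V(A_{0})$ lies in some finite subgroup $\mu_{m}^{d}$; since the dual map $\beta^{\ast}$ preserves $\mu_{m}^{d}$, the set $W:=\bigcup_{n\geq 0}(\beta^{\ast})^{n}V(A_{0})$ is finite and $\beta^{\ast}$-invariant. Let $\mathcal{I}=I(W)\subseteq R$; then $\mathcal{I}\neq 0$, $\tilde{\beta}(\mathcal{I})\subseteq \mathcal{I}$ because $\beta^{\ast}W\subseteq W$, and $\mathcal{I}\subseteq \sqrt{A_{0}}$ because $V(A_{0})\subseteq W$. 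Choose $k$ with $\sqrt{A_{0}}^{k}\subseteq A_{0}$ (possible because $R/A_{0}$ is Artinian) and set $J=\mathcal{I}^{k}$. Then $J$ is a nonzero $\tilde{\beta}$-invariant ideal of $R$ contained in $A_{0}$, and $\alpha(J)=\tilde{\beta}(J)\,c\subseteq J\cdot c\subseteq J$ because $J$ is an ideal of $R$ and $c\in R$. The hypothesis $d\geq 2$ enters decisively in the Hartogs step and fails for $d=1$, where $R$ is a PID and $\alpha$ can behave in the ``shrinking'' way that powers the lamplighter representation. The main obstacle is that the natural $\beta^{\ast}$-invariant object $W$ only controls $\sqrt{A_{0}}$, so one must pass to the high power $\mathcal{I}^{k}$ to ensure the containment inside $A_{0}$ itself.
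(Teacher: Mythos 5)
Your proof is correct, but it follows a genuinely different route from the paper's. The paper makes the same initial reductions (replace $H$ by $A_{0}X$, view $A_{0}$ as a finite-index ideal $\mathcal{I}$ of $k[X]$, observe that the multiplicative part of $f$ must be injective), but from there it argues by elementary polynomial manipulation: it replaces $\mathcal{I}$ by $\mathcal{J}=\langle x_{1}^{v}-1,\dots ,x_{d}^{v}-1\rangle$ where $v$ is the exponent of the unit group of $k[X]/\mathcal{I}$, sets $u_{i}=(x_{i}^{v}-1)^{\mu}$, uses commutativity of the generators to derive the relations $(x_{i}^{v\alpha }-1)u_{j}=(x_{j}^{v\alpha }-1)u_{i}$, and then invokes a standalone UFD/determinant lemma (Proposition 2 of the paper) to force every $u_{i}$ back into $\mathcal{J}$, so that $\mathcal{J}$ itself is the required nontrivial invariant ideal. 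Your two main steps replace this: the grade/Ext-vanishing argument (depth $\geq 2$ because $R/A_{0}$ is Artinian and $R$ is regular of dimension $d\geq 2$) shows the semilinear map is globally of the form $m\mapsto \tilde{\beta}(m)c$, which is a conceptual packaging of what the paper's cross-relations plus UFD lemma extract by hand; and your construction of an invariant ideal from the finite forward $\beta^{\ast}$-orbit $W$ of $V(A_{0})$ inside the roots of unity, followed by raising $I(W)$ to a power to land inside $A_{0}$ rather than merely inside $\sqrt{A_{0}}$, has no counterpart in the paper, which obtains invariance of $\mathcal{J}$ directly. What each buys: the paper's argument is self-contained and elementary; yours isolates exactly where $d\geq 2$ enters (the Hartogs step fails for $d=1$, consistent with the existence of the lamplighter representation) and is more robust. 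Minor points worth tightening: you tacitly use $f(A_{0})\subseteq A$, which is automatic because $A$ is the torsion subgroup of $G$ (the paper flags this at the start of Section 2); the letter $k$ is used for the field, the index $[X:\beta(X)]$, and the nilpotency exponent; and the containment $\tilde{\beta}(\mathcal{I}^{k})\subseteq \mathcal{I}^{k}$ deserves the one-line remark that $\tilde{\beta}$ is a ring homomorphism carrying $\mathcal{I}$ into $\mathcal{I}$.
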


An application of this result is

\begin{corollary}
If $d\geq 2$, then $G_{p,d}$ does not admit faithful state-closed
representations of degree $p$.
\end{corollary}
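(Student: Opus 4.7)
The plan is to reduce the Corollary to Theorem~1 via a short case analysis on the position of the base group $A$ relative to the stabilizer $H$ coming from a hypothetical degree-$p$ faithful state-closed representation. Such a representation amounts to a simple similarity pair $(H,f)$ with $[G_{p,d}:H]=p$, and since $p$ is prime, the chain $H\leq HA\leq G$ forces $[G:HA]\in\{1,p\}$, which splits the argument in two.

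In the case $HA=G$, the subgroup $H$ surjects onto $X\cong G/A$, so the hypothesis of Theorem~1 is satisfied and the theorem immediately denies simplicity of $f$.

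In the case $A\leq H$, my plan is to exhibit $A$ itself as a nontrivial subgroup of the $f$-core, contradicting simplicity. Normality of $A$ in $G$ and the containment $A\leq H$ are given, so the only thing left is the $f$-invariance $A^f\leq A$. I would obtain this from the torsion characterization $A=\{g\in G:g^p=1\}$, which follows from the standard computation $(ax)^p=a'x^p$ for $a,a'\in A$, $x\in X$, together with the torsion-freeness of $X$; any homomorphism then maps elements of order dividing $p$ to elements of order dividing $p$, giving $f(A)\leq A$. Since $A\neq 1$, this produces a nontrivial subgroup of the $f$-core.

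I do not anticipate a serious obstacle: Theorem~1 carries the main weight, and the $A\leq H$ case is dispatched by an elementary calculation in the semidirect product. The only point requiring a little care is not to overlook the alternative $A\leq H$, which is why I would organize the proof explicitly around the two possible values of $[G:HA]$.
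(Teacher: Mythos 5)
Your proof is correct and follows essentially the same route as the paper: the paper's version also reduces to Theorem~1 after ruling out $A\leq H$, using the fact (a standing hypothesis from Section~2, justified by $A$ being the torsion subgroup of $G_{p,d}$) that $f$ maps $A_{0}=A\cap H$ into $A$, so that if $A_{0}=A$ then $A$ itself lies in the $f$-core. Your explicit two-case organization around $\left[ G:HA\right]$ merely makes visible what the paper compresses into the remark that $A_{0}$ must be a proper subgroup of $A$.
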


Previously, it was shown that finitely generated torsion-free nilpotent
groups of nilpotency class $c>1$ do not admit faithful state-closed
representations of degree $p$ \cite{BerSid}.

In Section 4, we study representations of $G_{p,1}$ on the $p$-adic tree.
First, we characterize the faithful ones obtained with respect to normal
subgroups of index $p$.

\begin{theorem}
(Degree 1 Representations) Suppose $H$ is a normal subgroup of $G_{p,1}$ of
index $p$. Then every faithful state-closed representations of $G$ on the $p$%
-adic tree obtained with respect to $H$ is reducible to 
\begin{eqnarray*}
\varphi &:&a\rightarrow \alpha =\left( 0,1,...,p-1\right) \text{,} \\
x &\rightarrow &\xi =\left( \xi ^{n},\xi ^{n}\alpha ^{u\left( \xi \right)
},...,\xi ^{n}\alpha ^{u\left( \xi \right) (p-1)}\right)
\end{eqnarray*}%
for some integer $n$ and $u\left( x\right) \in k\left\langle x\right\rangle $
such that $\gcd \left( p,n\right) =1$ and $u\left( 1\right) \not=0$.
\end{theorem}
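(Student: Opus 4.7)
The strategy is to combine the reduction of Section~2 with an explicit computation on a natural transversal, then pin down the parameters $n$ and $u$ from the requirement that $f$ be simple. I would begin by applying the Section~2 reduction to replace $(H, f)$ by $(\dot H, \dot f)$ with $\dot H = A_0 Y$, $A_0 = A \cap H$, $Y = AH \cap X$, and $\dot f(A_0) \subseteq A$, $\dot f(Y) \subseteq X$. Since $[G : H] = p$, two subcases appear: either $A \subseteq H$ (whence $Y = \langle x^p \rangle$) or $A_0$ is proper of index $p$ in $A$ and $Y = X$. The first is ruled out immediately, because $A$ is then a nontrivial $G$-normal subgroup of $\dot H$ with $\dot f(A) \subseteq A$, so $A$ lies in the $\dot f$-core, contradicting simplicity. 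In the remaining case, normality of $H$ makes $G/H$ cyclic of order $p$, hence abelian, so $G' = (x - 1)A \subseteq H$; since $G'$ already has index $p$ in $A$, we conclude $A_0 = G' = (x - 1)A$.

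Next I would compute $\varphi$ explicitly with respect to the transversal $T = \{1, a, \ldots, a^{p-1}\}$. For $g = a$ the coset action is $a \cdot a^i H = a^{i+1} H$ with trivial correction terms, yielding $\varphi(a) = \alpha$. For $g = x$, from $x a^i = \sigma(a^i) \cdot x$ and $\sigma(a) \equiv a \pmod{A_0}$ one obtains a trivial top action together with correction term $h_i = i(x - 1)a \cdot x \in \dot H$ at vertex $i$. I then parametrize $\dot f$ by $\dot f(x) = x^n$ for some $n \in \mathbb{Z}$ and by $v := \dot f((x - 1)a) \in A$, writing $v$ as $r(x) a$ with $r \in \mathbb{F}_p[x, x^{-1}]$. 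Because the conjugates of $\alpha$ all commute in $\varphi(G)$, the image $\varphi(r(x) a)$ can be expressed as $\alpha^{\tilde r(\xi)}$ in the natural $k\langle \xi \rangle$-module structure on the normal closure of $\alpha$, where $\tilde r$ differs from $r$ by the conjugation convention of $\xi$ on $\alpha$. Substituting and collecting terms presents the state at position $i$ in the form $\xi^n \alpha^{u(\xi) i}$ for a Laurent polynomial $u$ explicitly recoverable from $r$.

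The final step is to identify the conditions on $n$ and $u$ that make $\dot f$ simple. Necessity is straightforward: if $u(1) = 0$ then $r(x) \in (x - 1)\mathbb{F}_p[x, x^{-1}]$, which makes $A_0$ itself $\dot f$-invariant, placing the nontrivial $G$-normal subgroup $A_0$ in the $\dot f$-core; if $p \mid n$, then the characteristic-$p$ identity $x^n - 1 = (x^{n/p} - 1)^p$ yields $v_{x-1}(x^n - 1) \geq p \geq 2$, which suffices to make the proper $G$-normal subgroup $(x - 1)^2 A$ of $A_0$ $\dot f$-invariant regardless of $r(1)$. For sufficiency, I would take an arbitrary $G$-normal subgroup $K \leq \dot H$ with $\dot f(K) \subseteq K$ and split on $K_A := K \cap A$ and $\bar K := K A / A \leq X$: $G$-normality forces $K_A \supseteq (x^m - 1) A$ when $m$ generates $\bar K$, while $\dot f$-invariance of $K_A$ translates into a divisibility in $\mathbb{F}_p[x, x^{-1}]$ that, under $\gcd(p, n) = 1$ (so $v_{x-1}(x^n - 1) = 1$), forces $r(1) = 0$ unless $K$ is trivial. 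The main obstacle I foresee is handling the subcase $\bar K \neq 1$ with a nontrivial cocycle lifting it: the endomorphism $\dot f$ twists the cocycle data through $x \mapsto x^n$, and ruling out all such extensions requires tracking both the module and cocycle classes simultaneously.
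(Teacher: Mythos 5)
Your necessity argument follows the paper's proof of this theorem essentially step for step: the same Section~2 reduction to $\dot H=A_0\langle x\rangle$ with $A_0=G'=(x-1)A$, the same transversal $\{a^i\}$ yielding trivial top action for $x$ and states $\xi^n\alpha^{iu(\xi)}$, and the same two obstructions (the ideal $\mathcal{I}=\langle x-1\rangle$ itself is $\mu$-invariant when $u(1)=0$, and $\mathcal{I}(x-1)^2$ is $\mu$-invariant when $p\mid n$ via $x^n-1=(x^{n/p}-1)^p$). So for the statement as literally given you are on the paper's route and correct.

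The one place you stall --- the ``main obstacle'' of a normal $f$-invariant $K$ with $\bar K=KA/A\neq 1$ lifted by a nontrivial cocycle --- is not actually an obstacle, and the paper dispenses with it before any computation, in the Lemma of Section~2.2. The point is that for such a $K$ one first shows $K_0=K\cap A$ is a normal $\mu$-invariant ideal (your divisibility/valuation argument: with $\gcd(p,n)=1$ and $u(1)\neq 0$, each application of $\mu$ lowers the $(x-1)$-valuation by exactly one since $\Phi_n(1)=n\not\equiv 0$, so no nonzero ideal inside $\mathcal{I}$ survives), hence $K\cap A=1$; but then $[K,A]\leq K\cap A=1$ because both are normal, so $K$ centralizes $A$, and since $A=k[x,x^{-1}]$ is a faithful $X$-module (equivalently $C_X(A)=1$), any $wx^m\in K$ forces $(x^m-1)A=0$, i.e.\ $m=0$ and $K\leq A$, whence $K=1$. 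No tracking of cocycle classes is needed; the commutator containment $[K,A]\leq K\cap A$ collapses the extension problem entirely. With that observation inserted, your sketch closes into a complete proof matching the paper's.
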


The methods of reduction use the replacement arguments discussed in Section
2. Next, we produce faithful representations from those subgroups $H$ of
index $p$ which are not necessarily normal.

\begin{theorem}
Let $G_{p,1}=C_{p}wrC$, $C_{p}=\left\langle a\right\rangle $, $%
C=\left\langle x\right\rangle $. Also, let $j\in \left\{ 1,...,p-1\right\} $
and $\beta $ be the permutation of $\left\{ 0,1,...,p-1\right\} $ defined by 
$\beta :i\rightarrow ij$ modulo $p$. Then 
\begin{eqnarray*}
\varphi &:&a\rightarrow \alpha =\left( 0,1,...,p-1\right) , \\
x &\rightarrow &\xi =\left( \xi ,...,\xi \alpha ^{i},...,\xi \alpha ^{\left(
p-1\right) }\right) \beta \text{ }
\end{eqnarray*}%
extends to a faithful state-closed representation of $G_{p,1}$ on the $p$%
-adic tree.
\end{theorem}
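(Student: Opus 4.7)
The plan is to realize the proposed $\varphi$ as the state-closed representation of $G=G_{p,1}$ arising from an explicit similarity pair $(H,f)$ of index $p$, and to verify faithfulness by showing the $f$-core of $H$ is trivial. State-closedness is then automatic from the construction.

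Set $a_k=a^{x^k}$ and identify the $\mathbb{F}_pX$-module $A$ with $\mathbb{F}_p[x^{\pm 1}]$ via $a_k\leftrightarrow x^k$. I take $H=A_0\langle x\rangle$, where $A_0=\ker\epsilon$ for the $\mathbb{F}_p$-linear functional $\epsilon(a_k)=j^{-k}$; under the identification $A_0$ is the principal ideal $(x-j^{-1})$, and $\{1,a,\ldots,a^{p-1}\}$ is a transversal. The elements $c_k=a_{k+1}a_k^{-j^{-1}}$ form an $\mathbb{F}_p$-basis of $A_0$ with $c_k^x=c_{k+1}$. Define $f:H\to G$ by $f(x)=x$ and $f(c_k)=a_k^{-j^{-1}}$; the equivariance $f(c_k^x)=f(c_k)^{f(x)}$ is immediate, so $f$ is a well-defined homomorphism.

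Applying the similarity-pair recursion, for $g=a$ one has $h_i=a^{-(i+1)}\cdot a\cdot a^i=1$, giving $\varphi(a)=\alpha$. For $g=x$, the relation $xa^i=a_{-1}^ix$ yields $\sigma_x(i)=ji\pmod{p}$, matching $\beta$, and $h_i=a^{-ji}a_{-1}^ix$. The identity $c_{-1}=a_0a_{-1}^{-j^{-1}}$ in $A_0$ gives $a^{-ji}a_{-1}^i=c_{-1}^{-ij}$, hence $f(h_i)=(a_{-1}^{-j^{-1}})^{-ij}x=a_{-1}^ix=xa^i$, so the state of $\xi$ at position $i$ is $\varphi(xa^i)=\xi\alpha^i$. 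This produces $\xi=(\xi,\xi\alpha,\ldots,\xi\alpha^{p-1})\beta$ as claimed.

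For faithfulness, let $K$ be the $f$-core and $L=K\cap A\leq A_0$. Under the identification, $L$ is a principal ideal $(q)$ of $\mathbb{F}_p[x^{\pm 1}]$ with $(x-j^{-1})\mid q$, and $f|_{A_0}$ corresponds to the map $(x-j^{-1})r(x)\mapsto -j^{-1}r(x)$, so $f(L)$ corresponds to $(q/(x-j^{-1}))$. The inclusion $f(L)\leq L$ forces $(x-j^{-1})\mid 1$; since the units of $\mathbb{F}_p[x^{\pm 1}]$ are the monomials $cx^n$, this is impossible, so $L=1$. If $K$ contains an element $y=x^mz$ with $m>0$ and $z\in A_0$, normality under conjugation by $a$ yields $aya^{-1}y^{-1}=a_0a_{-m}^{-1}\in L=1$, forcing $m=0$. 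Hence $K=1$ and $\varphi$ is faithful.

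The main obstacle is isolating the correct module-theoretic picture: once $A_0=(x-j^{-1})A$ is recognized and $f|_{A_0}$ is seen as division by that factor, $f$-invariance of a submodule reduces to the divisibility assertion that $(x-j^{-1})$ is a unit in $\mathbb{F}_p[x^{\pm 1}]$, which fails. The matching of $\varphi$ with the stated formula and the elimination of an $x^m$-lift in the $f$-core are then routine bookkeeping.
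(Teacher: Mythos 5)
Your proposal is correct and follows essentially the same route as the paper: both realize $\varphi$ via a similarity pair $(H,f)$ with $H=A_{0}\left\langle x\right\rangle $ for a maximal ideal $A_{0}=\left( x-c\right) $ of $k\left\langle x\right\rangle $, read $f|_{A_{0}}$ as division by the linear factor, and get simplicity because no nonzero ideal inside $\left( x-c\right) $ can survive that division (your PID divisibility phrasing versus the paper's iteration of $\mu$, and your choice $c=j^{-1}$ with left cosets versus the paper's $c=j$ with right cosets, are only cosmetic differences).
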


We note that this representation is finite state; indeed the group $%
G^{\varphi }$ is an automaton group generated by the $p$ states of $\xi $.
For $p=2$, the representation is defined by $\varphi :a\rightarrow \alpha
=\left( 0,1\right) ,x\rightarrow \xi =\left( \xi ,\xi \alpha \right) $ and
is the classical representation of $G_{2,1}$. Also, for $p=3$, $\varphi
:a\rightarrow \alpha =\left( 0,1,2\right) ,x\rightarrow \xi =\left( \xi ,\xi
\alpha ,\xi \alpha ^{2}\right) \left( 1,2\right) $ and $\xi $ is equivalent
to the automaton in \cite{BonDaRo}.

In Section 4, we provide uniformly faithful state-closed representations of $%
G=G_{p,d}$ on the $p^{2}$-tree, for all primes $p$ and for all $d\geq 2$.

\begin{theorem}
(Degree 2 Representations) Let $d\geq 2,G=G_{p,d}$ and $G^{\prime }$ be its
commutator subgroup. Furthermore, let $H=G^{\prime }Y$ where $Y=\left\langle
x_{1}^{p},x_{2},..,x_{d}\right\rangle $. Then the map 
\begin{eqnarray*}
f &:&a^{x_{1}^{i}-1}\rightarrow a^{i}\text{ \ (}1\leq i\leq p-1\text{), }%
a^{z-1}\rightarrow e\text{ for all }z\in Y, \\
x_{1}^{p} &\rightarrow &x_{2},\text{ }x_{j}\rightarrow x_{1+j}\text{ ( }%
2\leq j\leq d-1\text{ ), }x_{d}\rightarrow x_{1}
\end{eqnarray*}%
extends to a simple homomorphism $f:H\rightarrow G_{p,d}$.
\end{theorem}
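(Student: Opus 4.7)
\textit{Well-definedness.} Since $G'\leq A$, $Y\leq X$, and $A\cap X=\{e\}$, one has $H=G'\rtimes Y$. Identifying $A$ with $\mathbb{F}_p[X]$ via $a\leftrightarrow 1$ gives $G'=(X-1)\mathbb{F}_p[X]$; let $\tilde f\colon \mathbb{F}_p[Y]\to \mathbb{F}_p[X]$ be the ring isomorphism extending $f|_Y$ (an isomorphism because $f|_Y$ sends the basis $\{x_1^p,x_2,\dots,x_d\}$ of $Y$ to a basis of $X$). The prescribed rule on the generators of $G'$ extends $\mathbb{F}_p$-linearly to the closed formula
\[\phi\Bigl(\sum_{i=0}^{p-1}x_1^i u_i\Bigr)\;=\;\tilde f\Bigl(\sum_{i=0}^{p-1}i\cdot u_i\Bigr),\qquad u_i\in \mathbb{F}_p[Y];\]
consistency on the specified generators follows from the direct-sum decomposition $(X-1)=\mathbb{F}_p\langle x_1^i-1\colon 1\leq i\leq p-1\rangle\oplus\bigoplus_{i=0}^{p-1}x_1^i(Y-1)$. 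The conjugation compatibility $\phi(u\cdot y)=\phi(u)\cdot\tilde f(y)$ for $y\in Y$ (checked on the generators $u=x_1^i-1$ and $u=z-1$) makes $f$ a well-defined homomorphism on the semidirect product $H$.

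\textit{Reduction of simplicity.} Let $K$ be the $f$-core of $H$. The image $KA/A\leq Y\leq X$ is $\tilde f$-invariant and trivially normal in $X$, so it lies in the $\tilde f$-core of $Y$ in $X$; by the simplicity of the standard pair $(Y,\tilde f)$ for $X$ (the ``typical example'' recalled in the introduction) this core is trivial, giving $K\leq A\cap H=G'$. Thus $K$ corresponds to an $\mathbb{F}_p[X]$-ideal $I\leq(X-1)\mathbb{F}_p[X]$ with $\phi(I)\leq I$, and the task reduces to showing $I=\{0\}$.

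\textit{Chain of ideals and induction.} Define $K^{(1)}=\tilde f^{-1}\bigl((Y-1)\mathbb{F}_p[X]\bigr)$ and $K^{(n+1)}=\tilde f^{-1}\bigl(K^{(n)}\mathbb{F}_p[X]\bigr)$. One computes that $K^{(n)}\mathbb{F}_p[X]$ equals the kernel of the canonical map $\mathbb{F}_p[X]\twoheadrightarrow\mathbb{F}_p[X/L_n]$ for a descending chain of finite-index subgroups $L_n\leq X$ obtained by iterating $\tilde f^{-1}$ on the generating set of $Y$; in particular the generator-exponents of $L_n$ grow by successive factors of $p$, so $\bigcap_n L_n=\{e\}$ and $\bigcap_n K^{(n)}\mathbb{F}_p[X]=\{0\}$. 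It therefore suffices to prove by induction on $n$ that $I\leq K^{(n)}\mathbb{F}_p[X]$; the conclusion $I=\{0\}$ then follows.

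\textit{The inductive step is the main obstacle.} Assuming $I\leq K^{(n)}\mathbb{F}_p[X]$, the $\phi$-invariance of $I$ together with its $\mathbb{F}_p[X]$-ideal structure yields, for each $u\in I$ decomposed as $u=\sum x_1^iu_i$, the constraints $(t-1)u_j\in K^{(n+1)}$ for every $j$ (via the recursion $\phi(x_1 v)-\phi(v)=\tilde f(\sigma(v))$ with $\sigma(v)=\sum v_i$ and $t=x_1^p$, applied to $v=x_1^{k}u$), together with the $\tau$-constraints $\tau(x_1^ku)\in K^{(n+1)}$ for all $k$. The principal obstacle is promoting $(t-1)u_j\in K^{(n+1)}$ to $u_j\in K^{(n+1)}$: since $t-1$ is nilpotent in the finite $p$-group ring $\mathbb{F}_p[Y]/K^{(n+1)}$, it is a zero-divisor, and one cannot simply cancel it. The resolution will be to exploit the full $\mathbb{F}_p[X]$-ideal closure of $I$ by applying the same extraction procedure to $(x_j-1)u\in I$ for each $j\geq 2$ and to arbitrary iterates $x_1^{k_1}\cdots x_d^{k_d}u$; the combined $\mathbb{F}_p[Y]$-linear system is rich enough to eliminate the $(t-1)$-factors and force $u_j\in K^{(n+1)}$. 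Once this promotion is established, the induction closes and $I\leq\bigcap_n K^{(n)}\mathbb{F}_p[X]=\{0\}$, completing the proof.
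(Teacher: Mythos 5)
Your set-up is sound: the identification of $G'$ with the augmentation ideal $(X-1)\mathbb{F}_p[X]$, the closed formula $\phi\left(\sum x_1^i u_i\right)=\tilde f\left(\sum i\,u_i\right)$ (which agrees with the paper's $\mu$ on the decomposition of $\mathcal{I}$), the verification of the skew condition, and the reduction of simplicity to the statement that the only $\phi$-invariant $\mathbb{F}_p[X]$-ideal contained in $(X-1)\mathbb{F}_p[X]$ is $\{0\}$ are all correct and consistent with the paper's framework.

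The gap is in the final step, which is where the entire content of the theorem lives. Your inductive scheme reduces everything to promoting $(t-1)u_j\in K^{(n+1)}$ to $u_j\in K^{(n+1)}$, where $t-1$ is a zero-divisor --- indeed nilpotent --- modulo $K^{(n+1)}$; at exactly that point you write that the combined linear system ``is rich enough'' and that ``once this promotion is established'' the induction closes. That is a statement of intent, not a proof: the annihilator of $t-1$ in the finite group algebra $\mathbb{F}_p[Y/L_{n+1}]$ is the nonzero ideal generated by the norm element $1+t+\dots+t^{o(t)-1}$, so a genuine argument is needed to rule out the $u_j$ landing there, and none is given. The base case $I\subseteq K^{(1)}\mathbb{F}_p[X]$ is likewise not free, since $K^{(1)}\mathbb{F}_p[X]$ has codimension $p^2$ in $\mathbb{F}_p[X]$ while $(X-1)\mathbb{F}_p[X]$ has codimension $1$, so it requires the same kind of promotion. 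The paper spends essentially all of its effort precisely on this point: it takes a hypothetical nonzero element of the invariant ideal, minimal first in the number of variables and then in total degree $\delta(w)$, and uses the explicit identity $\left(x_1^{u}q(x_1)\right)^{\mu}-x_2^{u_1}q(x_1)^{\mu}=u_0\left(\sum_i c_i\left(x_2^{i_1}-1\right)\right)x_2^{u_1}$ together with the auxiliary elements $W_j=w\cdot\left(x_1^{j}-1\right)$ to manufacture invariant elements of strictly smaller weight, eventually forcing the constant term $w_0$ to vanish, a contradiction. An argument of comparable substance (or a completed version of your elimination of the $(t-1)$-factors) is required before your proposal constitutes a proof.
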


Finally, we write down concretely the above representation for $G_{2,2}$ in
its action on the $4$-tree which is indexed by sequences from $\left\{
0,1,2,3\right\} $.

\begin{theorem}
The following automorphisms of the $4$-tree 
\begin{eqnarray*}
\alpha &=&\left( 0,1\right) \left( 2,3\right) , \\
\xi _{1} &=&\left( 1,a^{\xi _{2}^{-1}},\xi _{2},a^{\xi _{2}^{-1}}\xi
_{2}\right) \left( 0,2\right) \left( 1,3\right) , \\
\xi _{2} &=&\left( \xi _{1},\xi _{1},\xi _{1},a^{\left( 1+\xi
_{1}^{-1}\right) \xi _{2}^{-1}}\xi _{1}\right) \text{.}
\end{eqnarray*}%
generate a group $G$ isomorphic to $G_{2,2}$. In addition, $G$ is the
state-closure of $\xi _{1}$ and is finite-state; indeed, $\xi _{1}$ has $12$
states.
\end{theorem}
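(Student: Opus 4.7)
The plan is to apply Theorem 5 with $p = 2$ and $d = 2$, compute the resulting state-closed representation explicitly with respect to a carefully chosen transversal, and then identify the states of $\xi_1$ directly from the recursions. Theorem 5 produces a simple virtual endomorphism $f : H \to G = G_{2,2}$ where $H = G' \langle x_1^2, x_2\rangle$ has index $4$ in $G$; by the construction in Section 2 this yields a faithful state-closed representation $\varphi : G \hookrightarrow \operatorname{Aut}(T_4)$, and I put $\alpha = \varphi(a)$, $\xi_1 = \varphi(x_1)$, $\xi_2 = \varphi(x_2)$. Faithfulness of $\varphi$ then gives $\langle \alpha, \xi_1, \xi_2 \rangle \cong G_{2,2}$.

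The choice of transversal is important. I would use $T = \{1, a, x_1, x_1 a\}$ rather than $\{1, a, x_1, a x_1\}$: because $Ta = T$ setwise, all sections of $\alpha$ are trivial and its permutation is $(0,1)(2,3)$. For each of $x_1, x_2$ and each $t_i \in T$, I would compute the coset action $\sigma_g(i)$ and the residue $h(g,i) = t_i g t_{\sigma_g(i)}^{-1} \in H$, then apply $f$ using the rules $f(a^{x_1-1}) = a$, $f(a^{z-1}) = e$ for $z \in Y = \langle x_1^2, x_2\rangle$, $f(x_1^2) = x_2$, $f(x_2) = x_1$, together with the conjugation identity $f(v^y) = f(v)^{f(y)}$ for $v \in [A,X]$ and $y \in Y$. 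The residues have normal form $a^u y$ with $u \in I_X$ and $y \in Y$; I would decompose $u \in \mathbb{F}_2[X]$ as a sum of terms of shape $(x_1-1) y_1$ and $(z-1)$ with $y_1, z \in Y$ (using identities such as $x_1^2 - x_1 = (x_1^2 - 1) + (x_1 - 1)$ in characteristic $2$), and handle the residual conjugation by $x_1 \notin Y$ via the split $x_1 = x_1^{-1} \cdot x_1^2$. The outcome matches the stated formulas for $\xi_1$ and $\xi_2$; note that the symbol $a$ appearing in those formulas is to be read as the tree automorphism $\alpha = \varphi(a)$.

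For the state-closure claim, the depth-$1$ sections of $\xi_1$ include $\xi_2$ and $\alpha^{\xi_2^{-1}}$; conjugating by $\xi_2$ then recovers $\alpha$, so the state-closure of $\xi_1$ contains $\{\alpha, \xi_1, \xi_2\}$ and hence equals $\varphi(G)$. For the count of $12$, I would expand sections iteratively. Setting $u = \alpha^{\xi_2^{-1}}$, $u' = \alpha^{\xi_1^{-1} \xi_2^{-1}} = \xi_1 u \xi_1^{-1}$, $w = u u'$, and $v = w \xi_1$, the elements $u, u', w$ all lie in the abelian subgroup $\varphi(A)$, each has order $2$, and consequently $uw = u'w = u'$ and $w^2 = 1$. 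Using these identifications and the recursions for $\xi_1$ and $\xi_2$, the iterated sections close on the twelve automorphisms $\{1, \xi_1, \xi_2, u, u', w, v, u \xi_1, u \xi_2, u' \xi_1, u' \xi_2, w \xi_2\}$.

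The main obstacle is the bookkeeping for $f$ in the second step: converting each $h(g,i)$ to the normal form $a^u y$, decomposing $u$ into the simple standard terms, and tracking conjugations that fall outside $Y$. The state enumeration in the third step, though tedious, is mechanical once the recursions are settled, because the collapsing identifications inside the abelian image $\varphi(A)$ keep the branching finite.
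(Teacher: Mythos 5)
Your proposal follows essentially the same route as the paper: specialize the simple virtual endomorphism of the Degree 2 Representations theorem (the paper's Theorem 4) to $p=2$, $d=2$, compute the induced tree representation on the transversal $\{x_1^i a^j\}$ via cofactors and their $f$-images, and enumerate the iterated sections of $\xi_1$, arriving at the same list of $12$ states (your $u$, $u'$, $w$, $v$ are the paper's $a^{x_2^{-1}}$, $a^{x_1^{-1}x_2^{-1}}$, $a^{x_2^{-1}+x_1^{-1}x_2^{-1}}$ and $s_{10}$). The only blemish is cosmetic: $u'w=u$ rather than $u'$, which still lies in your twelve-element set, so the argument is unaffected.
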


\section{Virtual endomorphisms of semidirect products}

\subsection{Replacement arguments}

We consider in this section groups $G$ which are semidirect products $G=AX$
where $A$ is abelian, $C_{X}\left( A\right) =1$, $H$ a subgroup of $G$ of
index $m$ and homomorphism $f:H\rightarrow G$ such that $f:A_{0}\left(
=H\cap A\right) \rightarrow A$; the last condition is necessary in case $%
G=G_{p,d}$. 

We recall that a subgroup $K$ of $G$ is said to be $f$-invariant provided $%
K^{f}\leq K$.

\begin{proposition}
Let $\left( G,H,f\right) $ be as above, $Y=AH\cap X$ and let $\gamma $ be an
automorphism of $X$. Define the subgroup $\dot{H}=A_{0}Y$ of $G$. Then: (1) $%
\ \left[ G:\dot{H}\right] =\left[ G:H\right] $; (2) $f$ induces a
homomorphism $\alpha :Y\rightarrow X$ which together with $%
f|_{A_{0}}:A_{0}\rightarrow A$ defines a homomorphism $\dot{f}:\dot{H}%
\rightarrow G$; (3) $\left( H,f\right) $ may be replaced by $\left( \dot{H},%
\dot{f}\right) $ and by $\left( H^{\gamma },f^{\gamma }\right) $;(3) both
replacements preserve normality of $H$ and simplicity of $f.$
\end{proposition}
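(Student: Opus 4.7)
The plan is to handle the four assertions in sequence, exploiting the decomposition $G = AX$ throughout.

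For (1), identify $Y = AH \cap X$ with $\pi(H)$, where $\pi \colon G \to X$ is the projection with kernel $A$; since $A \cap X = 1$, the map $AH \cap X \to AH/A$ is an isomorphism onto $\pi(H)$. Hence $A\dot{H} = AY = AH$ and $\dot{H} \cap A = A_{0}$, giving
\[
[G:\dot{H}] = [X:Y]\cdot [A:A_{0}] = [G:H].
\]
A prerequisite is that $\dot{H} = A_{0}Y$ is actually a subgroup, which reduces to $Y$ normalising $A_{0}$: for $y = ah \in Y$ with $h \in H$ and $a_{0} \in A_{0}$, the abelianness of $A$ yields $ya_{0}y^{-1} = ha_{0}h^{-1} \in A \cap H = A_{0}$.

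For (2), define $\alpha \colon Y \to X$ by $\alpha(y) = \pi(f(h))$ for any lift $h \in H$ of $y$; two lifts differ by $A_{0}$, whose $f$-image lies in $A$ with trivial $X$-component, so $\alpha$ is well-defined and a homomorphism (as $\pi$ is). Set $\dot{f}(a_{0}y) = f(a_{0})\alpha(y)$. The nontrivial verification is that $\dot{f}$ respects multiplication: using $ya_{0}'y^{-1} = ha_{0}'h^{-1}$ (the same absorption) together with $f(h) = a'\alpha(y)$ for some $a' \in A$ whose commutation with $f(a_{0}') \in A$ is trivial, one obtains $\dot{f}(a_{0}y \cdot a_{0}'y') = \dot{f}(a_{0}y)\dot{f}(a_{0}'y')$.

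For (3), the replacements yield similarity pairs whose associated state-closed representations have the same kernel, so simplicity passes between them. The $\gamma$-replacement is a formal equivalence via the automorphism of $G = AX$ extending $\gamma$. For $(\dot{H}, \dot{f})$ versus $(H,f)$, the key point is that the $f$-core of $H$ and the $\dot{f}$-core of $\dot{H}$ coincide as normal subgroups of $G$: any normal subgroup of $G$ contained in either is determined by its $A_{0}$-part and its image in $Y$, on which $f$ and $\dot{f}$ agree by construction, and the $f$- and $\dot{f}$-invariance conditions translate directly.

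For normality of $\dot{H}$: if $H \triangleleft G$, then $A_{0} = A \cap H$ is normal in $G$, and conjugation of $\dot{H}$ by $X$ is immediate since $X$ is abelian and $A_{0}, Y$ are each $X$-stable. For conjugation by $a \in A$ and $y = \pi(h) \in Y$, the absorption trick yields $[a,y] = [a,h] \in A \cap H = A_{0}$, so $aya^{-1} = [a,y]\,y \in A_{0}Y = \dot{H}$. The principal obstacle throughout is the homomorphism check for $\dot{f}$ and the identification of cores; both are resolved by the single commutation identity enabled by the abelianness of $A$ and the hypothesis $f(A_{0}) \subseteq A$.
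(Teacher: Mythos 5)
Your treatment of (1), (2), and the normality claim is correct and is essentially the paper's argument in different clothing: your projection $\pi$ and the ``absorption'' identity $ya_{0}y^{-1}=ha_{0}h^{-1}$ are exactly the paper's decomposition $H=A_{0}\left\langle v(y)y\mid y\in Y\right\rangle $ and its computation $\left( \left( a_{0}\right) ^{y}\right) ^{\mu }=\left( \left( a_{0}\right) ^{v(y)y}\right) ^{f}=\left( \left( a_{0}\right) ^{\mu }\right) ^{w\left( y\right) y^{\alpha }}=\left( \left( a_{0}\right) ^{\mu }\right) ^{y^{\alpha }}$, both hinging on $A$ being abelian and $f(A_{0})\subseteq A$.

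The gap is in the simplicity part of (3). You assert that the $f$-core of $H$ and the $\dot{f}$-core of $\dot{H}$ ``coincide as normal subgroups of $G$,'' but you neither prove this nor need it, and as stated it is doubtful: the two cores sit inside the different subgroups $H\neq \dot{H}$, and a normal $f$-invariant subgroup of $G$ contained in $H$ may contain elements $v(y)y$ with $v(y)\notin A_{0}$, hence need not lie in $\dot{H}$ at all. What is actually true, and what the paper proves, is weaker and suffices: if $K\leq \dot{H}$ is normal in $G$ and $\dot{f}$-invariant, then $K_{0}=K\cap A=K\cap A_{0}$ is normal in $G$ and $f$-invariant because $\dot{f}|_{A_{0}}=f|_{A_{0}}$, so simplicity of $f$ forces $K_{0}=1$. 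Your sketch also omits the final step: from $K\cap A=1$ and $K$ normal in $G$ one gets $[K,A]\leq K\cap A=1$, so $K$ centralizes $A$, and only now does the standing hypothesis $C_{X}(A)=1$ (which you never invoke) give $K=1$. Saying a normal subgroup is ``determined by its $A_{0}$-part and its image in $Y$'' does not by itself rule out a nontrivial $K$ with trivial $A_{0}$-part; the centralizer condition is what kills the $Y$-part. With these two repairs --- replace ``the cores coincide'' by the intersection-with-$A$ reduction, and add the $C_{X}(A)=1$ step --- your argument matches the paper's.
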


\begin{proof}
(\textbf{1}) The subgroup $A_{0}$ is normal in $H$ and $\left[ A:A_{0}\right]
=m_{1}$ a divisor of $m$. Also, $Y$ is a subgroup of $X$ and $\left[ X:Y%
\right] =m_{2}$ a divisor of $m$. Let $S$ be a right transversal of $A_{0}$
in $A$ and let $T$ be a right transversal of $Y$ in $X$. For every $y\in Y$,
there exists $v(y)\in A$ such that $H=A_{0}\left\langle v(y)y\mid y\in
Y\right\rangle $. The set $ST$ is a right transversal of $H$ in $G$ and $%
m=m_{1}m_{2}$:%
\begin{eqnarray*}
HST &=&A_{0}\left\langle v(y)y\mid y\in Y\right\rangle ST=\left\langle
v(y)y\mid y\in Y\right\rangle \left( A_{0}S\right) T \\
&=&\left\langle v(y)y\mid y\in Y\right\rangle AT=A\left\langle v(y)y\mid
y\in Y\right\rangle T \\
&=&A\left( YT\right) =AX=G\text{.}
\end{eqnarray*}

As $Y$ normalizes $A_{0}$ the set $\dot{H}=A_{0}Y$ is a subgroup of $G$ with
the same transversal $ST$.

(\textbf{2}) For every $y\in Y$ there exist a unique pair $w\left( y\right)
\in A,y^{\prime }\in X$ such that $f:v(y)y\rightarrow w\left( y\right)
y^{\prime }$. Define $\alpha :Y\rightarrow X$ by $y\rightarrow y^{\prime }$;
then $\alpha $ is a homomorphism.

Now let $\dot{f}:\dot{H}\rightarrow G$ be an extension of $\mu \left(
=f|_{A_{0}}\right) :A_{0}\rightarrow A$ and $\alpha :Y\rightarrow X$, by $%
\left( a_{0}y\right) ^{\dot{f}}=\left( a_{0}\right) ^{\mu }y^{\alpha }$ for
all $a_{0}\in A_{0},y\in Y$. To prove that $\dot{f}$ is a homomorphism it
suffices to prove $\left( \left( a_{0}\right) ^{\mu }\right) ^{y^{\alpha
}}=\left( \left( a_{0}\right) ^{y}\right) ^{\mu }$ for all $a_{0}\in
A_{0},y\in Y$:

\begin{eqnarray*}
\left( \left( a_{0}\right) ^{y}\right) ^{\mu } &=&\left( \left( a_{0}\right)
^{v(y)y}\right) ^{f}=\left( \left( a_{0}\right) ^{f}\right) ^{\left(
v(y)y\right) ^{f}} \\
&=&\left( \left( a_{0}\right) ^{\mu }\right) ^{w\left( y\right) y^{\alpha
}}=\left( \left( a_{0}\right) ^{\mu }\right) ^{y^{\alpha }}\text{.}
\end{eqnarray*}

(\textbf{3.1}) Suppose $H$ is a normal subgroup of $G$; then, $A_{0}=A\cap H$
is normal in $G$. Since $Y\leq X$ and $Y$ is normal in $X$ modulo $A$, it
follows that $Y$ is normal in $X$. Therefore, for $a_{0}\in A_{0},a\in A$, $%
y\in Y$ we have%
\begin{equation*}
\left( a_{0}y\right) ^{x}=\left( a_{0}\right) ^{x}y^{x},\text{ }\left[ y,a%
\right] =\left[ v(y)y,a\right]
\end{equation*}%
from which it follows that $\dot{H}$ is normal in $G$.

(\textbf{3.2}) Suppose $f$ is simple. To prove that $\dot{f}:\dot{H}%
\rightarrow G$ is simple, we consider a subgroup $K\leq \dot{H}$ such that $%
K $ is normal in $G$ and $K^{\dot{f}}\leq K$. Then $K_{0}=K\cap A$ is a
normal subgroup of $G$ and is $\dot{f}$-invariant :%
\begin{eqnarray*}
\left( K_{0}\right) ^{\dot{f}} &=&\left( K_{0}\right) ^{f}=\left( K\cap
A_{0}\right) ^{f}\leq K^{\dot{f}}\cap A \\
&\leq &K\cap A=\left( K\cap \dot{H}\right) \cap A \\
&=&K\cap \left( \dot{H}\cap A\right) =K\cap A_{0}=K_{0}\text{.}
\end{eqnarray*}%
Thus, $K_{0}\left( =K\cap A\right) =1$ and from $C_{X}\left( A\right) =1$,
we conclude $K=1$.

(\textbf{3.3) }The assertions about the replacement $\left( H,f\right) $ by $%
\left( H^{\gamma },f^{\gamma }\right) $ are easily verified.
\end{proof}

\begin{remark}
As $H$ is obtained from $\dot{H}=A_{0}Y$ by multiplying the elements $y\in Y$
by certain elements $v(y)\in \left( A\backslash A_{0}\right) \cup \left\{
e\right\} $, the subgroup $H$ may be viewed as a \textit{deformation} of $%
\dot{H}$ (maintaining the subgroup $A_{0}$ unchanged). Suppose $Y$ is
finitely presented, say with generators $y_{i}$ and relators $r_{j}$. Then
the process of obtaining all deformations of $\dot{H}$ is algorithmic: let $%
T $ be a transversal of $A_{0}$ in $A$ then for every choice $v_{i}\in T$ $%
\left( 1\leq i\leq s\right) $ we need that $r_{j}\left( v_{i}y_{i}\right)
\in A_{0}$ $\left( 1\leq i\leq s\right) $ be satisfied. In the case of $%
G_{p,d}$, the subgroup $Y$ is freely generated by $\left\{
y_{1},...,y_{d}\right\} $, satisfying $\left[ y_{i},y_{j}\right] =e$ for $%
1\leq i<j\leq d$. Then, since%
\begin{equation*}
\left[ v_{i}y_{i},v_{j}y_{j}\right] =\left[ w_{i},y_{j}\right] \left[
y_{i},w_{j}\right]
\end{equation*}%
where $w_{i}=v_{i}^{^{y_{i}}}$ $\left( 1\leq i\leq d\right) $, the set $%
\left\{ w_{1},...,w_{s}\right\} $ should satisfy%
\begin{equation*}
\left[ y_{i},w_{j}\right] \in \left[ y_{j},w_{i}\right] A_{0}\text{ for all }%
i,j\text{.}
\end{equation*}
\end{remark}

\subsection{Translation to module theoretic language}

We continue with the above context: $G$ is a semidirect product $G=AX$ where 
$A$ is abelian such that $C_{X}\left( A\right) =1$, $H$ \ is a subgroup of $%
G $ of finite index $m$ and $f:H\rightarrow G$ is a homomorphism such that $%
f:A_{0}\rightarrow A,Y\rightarrow X$. On viewing $A$ additively, our
conditions have the following module theoretic translation:

let $\mathcal{A}$ be the group ring $\mathbb{Z}\left( X\right) $ and $%
\mathcal{B}$ the subring$\mathcal{\ }\mathbb{Z}\left( Y\right) $. Then, $A$
is a right $\mathcal{A}$-module and $A_{0}$ a right $\mathcal{B}$-module of
index $m_{1}$ in $A$. The homomorphism $f$ induces a pair of homomorphisms $%
\left( \mu ,\alpha \right) $ where $\mu :A_{0}\rightarrow A$ is an additive
homomorphism and $\alpha $ is a multiplicative group homomorphism $\alpha
:Y\rightarrow X$.

Furthermore, $\alpha $ extends to a ring homomorphism $\alpha :\mathcal{B}%
\rightarrow \mathcal{A}$ and the action of $\mathcal{B}$ on $A_{0}$
satisfies the skew condition 
\begin{equation*}
\left( a_{0}.w\right) ^{\mu }=\left( a_{0}\right) ^{\mu }w^{\alpha }
\end{equation*}%
for all $a_{0}\in A_{0}$ and $w\in \mathcal{B}$.

\begin{lemma}
With the above notation, suppose $A_{0}$ is normal in $G$. Suppose $K$
normal $f$-invariant subgroup of $G$ implies $K\cap A=1$. Then, $f$ is a
simple$.$ Equivalently, in additive terms, if $f$ is simple then the only
right $\mathcal{A}$-submodule of $A$ contained in $A_{0}$ which is also $\mu 
$-invariant is $\left\{ 0\right\} $.
\end{lemma}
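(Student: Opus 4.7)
The plan is to extract the $f$-core of $H$, use the centralizer hypothesis $C_{X}(A)=1$ to force it into $A$, and then invoke the assumption to conclude it is trivial; afterwards I would record the module-theoretic translation as essentially a cosmetic rewriting.

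First I would set $K$ to be the $f$-core: the largest subgroup of $H$ that is normal in $G$ and satisfies $K^{f}\leq K$. The hypothesis of the lemma, applied to $K$ itself, gives $K\cap A=1$. Since $A$ is abelian and normal in $G$ and $K$ is normal in $G$, one has $[K,A]\leq K\cap A=1$, so $K$ centralizes $A$. A brief computation then shows $C_{G}(A)=A$: writing a general element of $G$ as $ax$ with $a\in A$, $x\in X$ and using that $A$ is abelian, $[ax,a^{\prime }]=[x,a^{\prime }]$ for every $a^{\prime }\in A$, so $C_{G}(A)\cap X=C_{X}(A)=1$, forcing $C_{G}(A)\leq A$. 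Combining $K\leq A$ with $K\cap A=1$ yields $K=1$, i.e.\ $f$ is simple.

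For the module-theoretic reformulation I would observe that a subgroup $M\leq A$ is a right $\mathcal{A}$-submodule exactly when it is $X$-invariant, which (since $A$ is abelian and normal in $G$) coincides with $M$ being normal in $G$. If moreover $M\leq A_{0}$, then $M\leq H$ and the condition $M^{\mu }\leq M$ is literally $M^{f}\leq M$. Conversely, any normal $f$-invariant subgroup of $G$ lying inside $A$ automatically sits in $A\cap H=A_{0}$ and is an $\mathcal{A}$-submodule on which $\mu $ acts stably. Under this dictionary the first half of the lemma becomes the implication \emph{(the only $\mu$-invariant $\mathcal{A}$-submodule of $A$ contained in $A_{0}$ is $0$) implies $f$ simple}, while the converse is immediate: if $f$ is simple then the $f$-core, and \emph{a fortiori} any such submodule, must be trivial.

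There is no real obstacle: the whole argument is the centralizer computation $C_{G}(A)=A$ together with the standing hypothesis $A_{0}\triangleleft G$, which guarantees that the intersections $K\cap A$ and $M\cap A_{0}$ that arise in the translation are automatically normal in $G$ and closed under $\mu$. No additional structural assumption beyond those already in force ($A$ abelian, $C_{X}(A)=1$, $A_{0}\triangleleft G$) is required.
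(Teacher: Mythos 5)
Your argument is correct and follows essentially the same route as the paper: intersect the $f$-core with $A$, invoke the hypothesis to get $K\cap A=1$, deduce $[K,A]=1$, and use $C_{X}(A)=1$ to force $K\leq C_{G}(A)=A$ and hence $K=1$, with the module-theoretic statement obtained by the standard dictionary between normal subgroups of $G$ inside $A$ and $\mathcal{A}$-submodules. You are in fact slightly more careful than the paper at the last step (the paper writes $K\leq C_{X}(A)=1$ where the clean statement is $K\leq C_{G}(A)=A$, whence $K=K\cap A=1$), so nothing is missing.
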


\begin{proof}
The subgroup $K_{0}=K\cap A$ is normal in $G$ and $K_{0}\leq K\cap A_{0}$.
Furthermore, $\left( K_{0}\right) ^{f}\leq \left( K\cap A_{0}\right)
^{f}\leq K\cap A=K_{0}$. We conclude, $K_{0}=1$ and $\left[ K,A\right] =1$
and $K\leq C_{X}\left( A\right) =1$.
\end{proof}

\begin{remark}
For the groups $G=G_{p,d}$, the ring $\mathcal{A}$ is the commutative group
algebra $k\left\langle X\right\rangle $ where $k=GF\left( p\right) $ and $%
\mathcal{B=}k\left[ Y\right] $. Suppose $A_{0}$ is a normal subgroup of $%
G_{p,d}$. Then$\ A_{0}$\ corresponds to an ideal $\mathcal{I}$ of $\mathcal{A%
}$. If $v$ is the exponent of the group of units of the quotient ring $\frac{%
\mathcal{A}}{\mathcal{I}}$ then $\mathcal{I}$ contains the ideal $\mathcal{A}%
\left( v\right) =\sum_{1\leq i\leq d}\mathcal{A}\left( x_{i}^{v}-1\right) $.
Moreover, the skew action reads as%
\begin{equation}
\text{ }\left( w.\nu \right) ^{\mu }=w^{\alpha }\left( \nu \right) ^{\mu } 
\tag{*}
\end{equation}%
for all $\nu \in \mathcal{I}$ and $w\in \mathcal{B=}\mathbb{Z}\left[ Y\right]
$. On denoting $K=\ker _{\mathcal{B}}\left( \alpha \right) $, it follows
that $\mathcal{I}K$ is an ideal of $\mathcal{A}$ contained in $\ker _{%
\mathcal{I}}\left( \mu \right) $. If $f$ is simple then the only $\mu $%
-invariant ideal of $\mathcal{A}$ is the zero ideal and therefore, $\alpha :$
$Y\rightarrow X$ is a monomorphism.
\end{remark}

\section{Nonexistence of Simple Virtual Endomorphisms}

The following result on solving certain polynomial equations will be used in
the proof of Theorem 1.

\begin{proof}
\end{proof}

\begin{proposition}
Let $M=\left( m_{ij}\right) $ be an $n\times n$ integral matrix where $n\geq
2$ and $v$ a nonzero integer; denote $\det \left( M\right) =t$. Let $K$ be a
field and let $u_{i}\in $ $K\left\langle x_{1},x_{2},...,x_{n}\right\rangle $
satisfy the equations 
\begin{equation*}
\left( x_{1}^{vm_{i1}}x_{2}^{vm_{i2}}...x_{n}^{vm_{in}}-1\right)
u_{j}=\left( x_{1}^{vm_{j1}}x_{2}^{vm_{j2}}...x_{n}^{vm_{jn}}-1\right) u_{i}
\end{equation*}%
for all $1\leq i<j\leq n$. Then, either $t=0$ or $u_{i}\in \mathcal{I}%
=\left\langle x_{1}^{v}-1,x_{2}^{v}-1,...,x_{d}^{v}-1\right\rangle _{\text{%
ideal}}$ for all $i$.
\end{proposition}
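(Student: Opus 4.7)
My approach is to reduce the problem to a Laurent polynomial subring in which the hypothesis relations decouple, using the condition $t\neq 0$ to set up a finite free module structure. The case $t=0$ is the first alternative, so I assume $t\neq 0$. Then the rows $\mathbf{m}_i$ of $M$ are $\mathbb{Z}$-linearly independent, which makes the monomials $z_i := \prod_k x_k^{m_{ik}}$ multiplicatively independent inside $R := K\langle x_1,\ldots,x_n\rangle$. The subring $L := K[z_1^{\pm 1},\ldots,z_n^{\pm 1}]$ is therefore a Laurent polynomial ring in $n$ indeterminates, and each hypothesis polynomial takes the clean form $f_i := x_1^{vm_{i1}}\cdots x_n^{vm_{in}} - 1 = z_i^v - 1$, a single-variable element of $L$. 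Since $[\mathbb{Z}^n:\langle\mathbf{m}_1,\ldots,\mathbf{m}_n\rangle]=|t|$ is finite, $R$ is a free $L$-module of rank $|t|$, with a basis $\{e_k\}$ given by coset representatives.

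I then decompose $u_i = \sum_k u_{i,k}e_k$ with $u_{i,k}\in L$. Because $f_i, f_j\in L$, the hypothesis $f_iu_j=f_ju_i$ separates coordinate-wise into $f_iu_{j,k}=f_ju_{i,k}$ in $L$ for every $k$. Now $L$ is a UFD, and for $i\neq j$ the elements $z_i^v-1$ and $z_j^v-1$ factor into irreducibles involving only $z_i$ and only $z_j$ respectively; these irreducibles remain non-associate in $L$ (the monomial localization cannot merge them), so $\gcd_L(f_i,f_j)=1$. Invoking $n\geq 2$, for each $j$ I pick some $i\neq j$; coprimality then forces $f_j\mid u_{j,k}$ for every $k$, and hence $u_j\in f_jR$.

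To finish, I verify $f_j\in\mathcal{I}$ using the telescoping identity
\begin{equation*}
\prod_{l=1}^n y_l^{a_l}-1 = \sum_{l=1}^n y_1^{a_1}\cdots y_{l-1}^{a_{l-1}}(y_l^{a_l}-1),
\end{equation*}
combined with $y^a-1\in(y-1)$ for every $a\in\mathbb{Z}$; substituting $y_l=x_l^v$ and $a_l=m_{jl}$ yields $f_j\in(x_1^v-1,\ldots,x_n^v-1)=\mathcal{I}$, whence $u_j\in f_jR\subseteq\mathcal{I}$. The step most in need of care is the coprimality claim when the characteristic of $K$ divides $v$, so that $z^v-1$ acquires repeated factors; the point is that the irreducible factors are still univariate polynomials in a single $z_i$ and remain irreducible in the monomial-localized $L$, so the prime factorizations of $f_i$ and $f_j$ share no common irreducible, and the conclusion goes through in every characteristic.
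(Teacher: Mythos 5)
Your proof is correct, and it takes a genuinely different route from the paper's. The paper splits into cases: for $n=2$ it shows directly that $x_{1}^{vm_{11}}x_{2}^{vm_{12}}-1$ and $x_{1}^{vm_{21}}x_{2}^{vm_{22}}-1$ are coprime in $K\left[ x_{1},x_{2}\right] $ (a common factor $g$ would satisfy $x_{1}^{vt}\equiv x_{2}^{vt}\equiv 1\func{mod}g$, hence divide both $x_{1}^{vt}-1$ and $x_{2}^{vt}-1$ and be constant), then reads off the divisibility of $u_{1},u_{2}$; for $n\geq 3$ it inducts by specializing variables $x_{l}\rightarrow 1$, concluding that the minors $M_{i,j}$ vanish for $i\neq 1$ and deriving $\left( n-1\right) \det \left( M\right) =0$, a contradiction. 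You instead give a single uniform argument: $t\neq 0$ makes the monomials $z_{i}$ multiplicatively independent, so $R$ is free of finite rank $\left\vert t\right\vert $ over the Laurent subring $L=K[z_{1}^{\pm 1},\dots ,z_{n}^{\pm 1}]$, the relations decouple coordinatewise, and $z_{i}^{v}-1$, $z_{j}^{v}-1$ are visibly coprime in the UFD $L$ because their irreducible factors are univariate in distinct variables and monomial units cannot identify them. Your approach buys a cleaner and arguably more robust argument --- no induction, no specialization step (the paper's inductive step is quite terse), the role of $\det M\neq 0$ is completely transparent, and you even obtain the stronger conclusion $u_{j}\in f_{j}R$ --- while the paper's $n=2$ computation is more elementary and self-contained. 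The two delicate points in your version, namely coprimality when the characteristic divides $v$ and the harmless sign of $v$ (for $v<0$ the element $z^{v}-1$ is a unit multiple of $z^{\left\vert v\right\vert }-1$), are correctly handled or routine.
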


\begin{proof}
(1) Let $n=2$. Suppose $g\in k\left\langle x_{1},x_{2}\right\rangle $ is a
nonconstant common factor of $x_{1}^{vm_{11}}x_{2}^{vm_{12}}-1$, $%
x_{1}^{vm_{21}}x_{2}^{vm_{22}}-1$. Then, $x_{1}^{vm_{11}}x_{2}^{vm_{12}}%
\equiv 1,x_{1}^{vm_{21}}x_{2}^{vm_{22}}\equiv 1$ $\func{mod}g$. Therefore,  
\begin{eqnarray*}
\left( x_{1}^{vm_{11}}x_{2}^{vm_{12}}\right) ^{m_{22}}\left(
x_{1}^{vm_{21}}x_{2}^{vm_{22}}\right) ^{-m_{12}} &\equiv &1 \\
&\equiv
&x_{1}^{vm_{11}m_{22}-vm_{21}m_{22}}x_{2}^{vm_{12}m_{22}-vm_{12}vm_{22}} \\
&\equiv &x_{1}^{vm_{11}m_{22}-vm_{21}m_{22}}\equiv x_{1}^{vt}\text{ }\func{%
mod}g\text{;}
\end{eqnarray*}

similarly, 
\begin{equation*}
x_{2}^{vt}\equiv 1\func{mod}g\text{.}
\end{equation*}
That is, there exist $h,h^{\prime }\in k\left\langle
x_{1},x_{2}\right\rangle $ such that%
\begin{equation*}
x_{1}^{vt}-1=gh,\text{ }x_{2}^{vt}-1=gh^{\prime }\text{;}
\end{equation*}%
the left hand of each equation is non-zero. Since $K\left[ x_{1},x_{2}\right]
$ is a unique factorization domain, $g,h\in k\left\langle x_{1}\right\rangle 
$ and $g,h^{\prime }\in k\left\langle x_{2}\right\rangle $; therefore, $g\in
k$; contradiction.

We conclude $\left( x_{1}^{vm_{21}}x_{2}^{vm_{22}}-1\right) |u_{1}$, $\left(
x_{1}^{vm_{11}}x_{2}^{vm_{12}}-1\right) |u_{2}$ and thus, there exists $%
l\left( x_{1},x_{2}\right) \in k\left[ x_{1},x_{2}\right] ~$such that%
\begin{equation*}
u_{1}=\left( x_{1}^{vm_{21}}x_{2}^{vm_{22}}-1\right) l\left(
x_{1},x_{2}\right) ,\text{ }u_{2}=\left(
x_{1}^{vm_{11}}x_{2}^{vm_{12}}-1\right) l\left( x_{1},x_{2}\right) \text{.}
\end{equation*}%
Hence $u_{1},u_{2}\in \left\langle x_{1}^{v}-1,x_{2}^{v}-1\right\rangle _{%
\text{ideal}}$.

(2) Let $n\geq 3$ and suppose by induction the assertion true for $n-1$.
Suppose $\ $furthermore $u_{s}\not\in \mathcal{I}$ for some $s$; without
loss, $s=1$. Let $M_{i,j}$ denote the $\left( i,j\right) $th minor of $M$.
On letting $x_{l}=1$ ($1\leq l\leq n$) in%
\begin{equation*}
\left( x_{1}^{vm_{11}}x_{2}^{vm_{12}}...x_{n}^{vm_{1n}}-1\right)
u_{i}=\left( x_{1}^{vm_{i1}}x_{2}^{vm_{i2}}...x_{n}^{vm_{in}}-1\right) u_{1}%
\text{ }\left( i\not=1\right)
\end{equation*}%
we produce by induction $M_{i,j}=0$ for all $\left( i,j\right) $ where $%
i\not=1$. Therefore 
\begin{eqnarray*}
\det \left( M\right) &=&\sum_{1\leq i\leq n}\left( -1\right)
^{i+j}m_{i,j}M_{i,j}=\left( -1\right) ^{1+j}m_{1,j}M_{1,j}\text{ for }\left(
1\leq j\leq n\right) \text{,} \\
\sum_{1\leq j\leq n}\det \left( M\right) &=&\sum_{1\leq j\leq n}\left(
-1\right) ^{1+j}m_{1,j}M_{1,j}=\det \left( M\right) , \\
\left( n-1\right) \det \left( M\right) &=&0\text{, }\det \left( M\right) =0%
\text{;}
\end{eqnarray*}%
contradiction.
\end{proof}

\subsection{Proof of Theorem 1}

The subgroup $A_{0}$ is normal in $G$, since it is centralized by $A$ and is
normalized by $X$. In additive notation, the subgroup $A_{0}$ is an ideal $%
\mathcal{I}$ of $\mathcal{A}=k\left[ X\right] $. To prove Theorem 1, the
following proposition suffices.

\begin{proposition}
Let $X=\left\langle x_{1},x_{2},...,x_{d}\right\rangle $ be a free abelian
group with $d\geq 2$. Furthermore let $\mathcal{A}$ be the group algebra $k%
\left[ X\right] $ and let $\mathcal{I}$ be an ideal in $\mathcal{A}$ of
finite index $m>1$. Let $\mu :\mathcal{I\rightarrow A},$ be a $k$%
-homomorphism and $\alpha :X\rightarrow X$ a multiplicative homomorphism
such that the skew action 
\begin{equation}
\left( r\nu \right) ^{\mu }=r^{\alpha }\nu ^{\mu }  \tag{**}
\end{equation}%
is satisfied for all $r\in \mathcal{A}$, $\nu \in \mathcal{I}$. $\ $Then 
\textit{the ideal }$\mathcal{I}$ contains a nontrivial\textit{\ }$\mu $%
\textit{-invariant ideal} of $\mathcal{A}$.
\end{proposition}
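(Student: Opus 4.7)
The plan is to exploit the finite index of $\mathcal{I}$ to produce explicit generators to which the preceding Proposition applies. Since $\mathcal{A}/\mathcal{I}$ is finite and each $x_i$ maps to a unit of $\mathcal{A}/\mathcal{I}$ (being invertible in $\mathcal{A}$), the exponent $v$ of the unit group of $\mathcal{A}/\mathcal{I}$ is finite and $x_i^v - 1 \in \mathcal{I}$ for every $i$. Set
\begin{equation*}
\mathcal{I}_0 = \sum_{i=1}^{d} \mathcal{A}(x_i^v - 1) \subseteq \mathcal{I}, \qquad u_i = (x_i^v - 1)^{\mu}.
\end{equation*}
Then $\mathcal{I}_0$ is a nonzero ideal of $\mathcal{A}$ and is the natural candidate for a $\mu$-invariant subideal of $\mathcal{I}$.

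Next, I would extract from the commutativity of $\mathcal{A}$ and the skew condition (**) the polynomial identities that feed into the preceding Proposition. Writing $x_i^{\alpha} = \prod_j x_j^{m_{ij}}$ with $M = (m_{ij})$ an integer $d\times d$ matrix, the relation $(x_i^v - 1)(x_j^v - 1) = (x_j^v - 1)(x_i^v - 1)$ in the commutative ring $\mathcal{A}$, combined with $(r\nu)^{\mu} = r^{\alpha}\nu^{\mu}$, yields
\begin{equation*}
\Bigl(\prod_k x_k^{v m_{ik}} - 1\Bigr) u_j \;=\; \Bigl(\prod_k x_k^{v m_{jk}} - 1\Bigr) u_i \qquad (1 \le i < j \le d).
\end{equation*}
This is precisely the hypothesis of the preceding Proposition with $n = d$. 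Its conclusion gives the dichotomy: either $\det(M) = 0$, or else $u_i \in \mathcal{I}_0$ for every $i$.

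In the second alternative, $\mathcal{I}_0$ itself is $\mu$-invariant: any $\eta = \sum_i r_i(x_i^v - 1) \in \mathcal{I}_0$ satisfies $\eta^{\mu} = \sum_i r_i^{\alpha} u_i \in \mathcal{I}_0$ by additivity of $\mu$ and the skew condition. In the first alternative, $\alpha \colon X \to X$ has nontrivial kernel as a homomorphism of free abelian groups, so there is $y \in X \setminus \{1\}$ with $y^{\alpha} = 1$; then $y - 1$ is a nonzero element of $K := \ker_{\mathcal{A}}(\alpha)$, and since $\mathcal{A}$ is a Laurent polynomial ring over a field, hence a domain, the ideal $\mathcal{I} K$ is a nonzero ideal of $\mathcal{A}$ contained in $\mathcal{I}$. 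The skew condition forces $(\nu r)^{\mu} = r^{\alpha}\nu^{\mu} = 0$ for all $\nu \in \mathcal{I}$ and $r \in K$, so $\mathcal{I} K \subseteq \ker(\mu)$ and is automatically $\mu$-invariant.

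The only nonformal step is the reduction to the polynomial equations of the preceding Proposition: one must recognize that the sole source of relations on the $u_i$ comes from commutativity of the $x_i^v - 1$ inside $\mathcal{A}$. Once this observation is made, the argument splits cleanly into the two cases, and in each case the required nontrivial $\mu$-invariant ideal is produced with little more than $k$-linearity of $\mu$ and the integrality of $\mathcal{A}$.
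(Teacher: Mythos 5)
Your proof is correct and follows essentially the same route as the paper: pass to the subideal generated by the elements $x_i^{v}-1$, derive the relations $\left( x_i^{v\alpha }-1\right) u_j=\left( x_j^{v\alpha }-1\right) u_i$ from commutativity plus the skew condition, and apply the preceding Proposition. The paper only packages the degenerate branch differently --- it invokes Remark 2 at the outset to assume $\alpha$ is a monomorphism and argues by contradiction, whereas you treat $\det (M)=0$ as the second horn of the dichotomy using the same $\mathcal{I}K\subseteq \ker \left( \mu \right) $ observation.
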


\begin{proof}
We suppose by contradiction that there exists a simple homomorphism $\mu :%
\mathcal{I\rightarrow A}$. Then, by Remark 2, $\alpha $ is a monomorphism.

Let the order of the group of units of $\frac{\mathcal{A}}{\mathcal{I}}$ be $%
v$. Then $\mathcal{J=}\left\langle X^{v}-1\right\rangle _{\text{ideal}}$ is
contained in $\mathcal{I}$ \ and so, we may assume $\mathcal{I=J}$.

Denote%
\begin{equation*}
\left( x_{i}^{v}-1\right) ^{\mu }=u_{i}\text{ }\left( 1\leq i\leq d\right) 
\text{.}
\end{equation*}%
Then, conditions (**) imply that for $i\not=j$, 
\begin{eqnarray*}
\mu &:&\left( x_{i}^{v}-1\right) \left( x_{j}^{v}-1\right) \rightarrow
\left( x_{i}^{v}-1\right) ^{\alpha }u_{j}\text{ } \\
&:&\left( x_{j}^{v}-1\right) \left( x_{i}^{v}-1\right) \rightarrow \left(
x_{j}^{v}-1\right) ^{\alpha }u_{i}
\end{eqnarray*}%
and therefore, 
\begin{equation*}
\left( x_{i}^{v\alpha }-1\right) u_{j}=\left( x_{j}^{v\alpha }-1\right) u_{i}%
\text{.}
\end{equation*}%
Since $\alpha :X\rightarrow X$ is a monomorphism, by the above proposition, $%
u_{i}\left( =\left( x_{i}^{v}-1\right) ^{\mu }\right) \in \mathcal{I}$ holds
for all $i$. As, $\mu :r\left( x_{i}^{v}-1\right) \rightarrow r^{\alpha
}u_{i}$ for all $i$, we conclude that $\mathcal{I}$ is $\mu $-invariant.
\end{proof}

\textbf{Proof of Corollary 1.}

\begin{proof}
Suppose $H$ is a subgroup of $G$ of prime index $q$ and $f:H\rightarrow G$ a
simple homomorphism. Then, as $f:A_{0}\rightarrow A$, it follows that $A_{0}$
is a proper subgroup of $A$ and so, $\left[ A:A_{0}\right] =q$. Therefore, $%
H $ projects onto $X$ modulo $A$ and $A_{0}$ is normal in $G$. We apply the
previous theorem to conclude the existence of a nontrivial subgroup of $%
A_{0} $ which is normal in $G$ and $f$-invariant and so reach a
contradiction.
\end{proof}

\section{Representations of $G_{p,1}$ of Degree $p$}

Given a general group $G$ and triple $\left( G,H,f\right) $ with $\left[ G;H%
\right] =m$, we recall the representation $\varphi $ of $G$ on the $m$-ary
tree indexed by sequences from $N=\left\{ 0,1,...,m-1\right\} $. Let $%
T=\left\{ e,t_{2},...,t_{m-1}\right\} $ be a right transversal $T$ of $H$ in 
$G$ and $\sigma $ be the permutational representation of $G$ on $T$. Then 
\begin{equation*}
g^{\varphi }=\left( \left( \left( h_{i}\right) ^{f}\right) ^{\varphi }\mid
0\leq i\leq m-1\right) g^{\sigma }\text{,}
\end{equation*}%
where $h_{i}$ are the Schreier elements of $H$ defined by 
\begin{equation*}
h_{i}=\left( t_{i}g\right) \left( t_{j}\right) ^{-1},\text{ }t_{j}=\left(
t_{i}\right) ^{g^{\sigma }}\text{;}
\end{equation*}%
see \cite{NekSid}.

Let $G=G_{p,1}$. We observe 
\begin{equation*}
\frac{G}{G^{\prime }}=C_{p}\times C\text{, }G=A\left\langle x\right\rangle 
\text{, }A=G^{\prime }\left\langle a\right\rangle \text{.}
\end{equation*}%
The following items deal with general state-closed representations of $%
G_{p,1}$ of degree $p$.

(\textbf{i}) Let $H$ be a subgroup of index $p$ in $G$. Then from the
representation of $G$ into $Sym\left( p\right) $ we conclude that $\frac{G}{%
Core(H)}$ is isomorphic to a metabelian transitive subgroup of $Sym\left(
p\right) $ and therefore is of order multiple of $p$ and is isomorphic to a
subgroup of the semidirect product $C_{p}C_{p-1}$ where $C_{p}$ is normal
and $C_{p-1}$ acts on it as the full automorphism group $Aut\left(
C_{p}\right) $.

As $A_{0}=A\cap H$ and $\left[ G:H\right] =p\,$, we conclude that $%
A_{0}=A\cap Core\left( H\right) $ and so is normal in $G$. As we have argued
in Section 2, we may assume $H=A_{0}X$.

(\textbf{ii}) Additively, $A$ corresponds to the $k$-algebra $\mathcal{A=}%
k\left\langle x\right\rangle $ and $A_{0}$ corresponds to a maximal ideal $%
\mathcal{M}$ of $\mathcal{A}$. As $\frac{k\left\langle x\right\rangle }{%
\mathcal{M}}\cong k$, it follows that $\mathcal{M+}x$ $=$ $\mathcal{M+}c$
where $\left( 1\leq c\leq p-1\right) $. Therefore, $\mathcal{M=}\left\langle
x-c\right\rangle _{\text{ideal }}$ and so we have $p-1$ distinct maximal
ideals%
\begin{equation*}
\mathcal{M}\text{ }\mathcal{=}\text{ }\left\langle x-c\right\rangle _{\text{%
ideal }}\text{ }\left( 1\leq c\leq p-1\right) \text{.}
\end{equation*}

(\textbf{iii}) Let $A_{0}$ correspond to $\mathcal{M}$ and let $%
f:H\rightarrow G$ be simple. Then, as before, $f$ corresponds to a pair of
homomorphisms $\left( \mu ,\alpha \right) $ where $\mu :$ $\mathcal{M}%
\rightarrow \mathcal{A}$ is an additive homomorphism and $\alpha
:X\rightarrow X$ is a multiplicative monomorphism. Let $\mu :x-c\rightarrow
u\left( x\right) $ for some $u\left( x\right) \in \mathcal{A}$, $\alpha
:x\rightarrow x^{n}$ for some integer $n$. Then $\mu :\mathcal{M\rightarrow A%
}$ is defined by%
\begin{equation*}
\mu :r\left( x\right) \left( x-c\right) \rightarrow r\left( x^{n}\right)
u\left( x\right) .
\end{equation*}%
We derive below some restrictions on $n,u\left( x\right) $.

(\textbf{iii.1}) \textbf{Assertion}. $u\left( c^{n^{i}}\right) \not=0$ for
all $i$.

\textbf{Proof}. If $\left( x-c\right) |u$ then $\mu :\mathcal{M}\rightarrow 
\mathcal{M}$; contradiction.

Suppose $u\left( x^{n^{t}}\right) $ is not a multiple of $x-c$ for $0\leq
t\leq i-1$, yet $u\left( x^{n^{i}}\right) =u^{\prime }\left( x\right) \left(
x-c\right) $. Then 
\begin{eqnarray*}
\mu  &:&u\left( x\right) u\left( x^{n}\right) ...u\left( x^{n^{i-1}}\right)
\left( x-c\right) \rightarrow u\left( x^{n}\right) ...u\left(
x^{n^{i}}\right) u\left( x\right)  \\
&=&\left( u\left( x\right) u\left( x^{n}\right) ...u\left(
x^{n^{i-1}}\right) \right) u^{\prime }\left( x\right) \left( x-c\right) 
\text{;}
\end{eqnarray*}%
that is, the ideal $\mathcal{M}u\left( x\right) u\left( x^{n}\right)
...u\left( x^{n^{i-1}}\right) $ is $\mu $-invariant; contradiction.

(\textbf{iii.2}) Let $n=p^{s}n^{\prime },$ $\gcd \left( p,n^{\prime }\right)
=1,$ $n^{\prime }n^{\prime \prime }\equiv 1~\func{mod}p$. Then, $s=0$ or $%
o\left( c\right) $ $\nmid $ $n^{\prime }-1$.

\textbf{Proof}. Suppose $s\not=0$ and $o\left( c\right) |n^{\prime }-1\,$.
Then,

\begin{eqnarray*}
\mu &:&r\left( x\right) \left( x-c\right) ^{2}=\left( r\left( x\right)
\left( x-c\right) \right) .\left( x-c\right) \rightarrow r\left( x\right)
^{\alpha }(x-c)^{\alpha }.\left( x-c\right) ^{\mu } \\
&=&r\left( x^{n}\right) \left( x^{n}-c\right) u\left( x\right) =r\left(
x^{n}\right) \left( x^{n^{\prime }}-c\right) ^{p^{s}}u\left( x\right) \\
&=&r\left( x^{n}\right) \left( x^{n^{\prime }}-c^{n^{\prime }}\right)
^{p^{s}}u\left( x\right) =\ast \left( x-c\right) ^{2}\text{;}
\end{eqnarray*}%
thus $\mathcal{M}\left( x-c\right) ^{2}$ is $\mu $-invariant; a
contradiction.

\subsection{\textbf{Proof of Theorem 2}}

(\textbf{1}) Since $\ H$ contains $G^{\prime }$, it follows that $G^{\prime
}=A_{0}$. By the replacement argument in Section 2, we may assume $%
H=A_{0}\left\langle x\right\rangle $\ and $f:x\rightarrow x^{n}$. for some
nonzero integer. Furthermore, additively, $A_{0}$ corresponds to the ideal $%
\mathcal{I=}\left\langle x-1\right\rangle _{\text{ideal}}$ in the group
algebra $\mathcal{A=}k\left\langle x\right\rangle $. Therefore, $f$ $\ $%
induces on $\mathcal{I}$ the additive homomorphism $\mu :\mathcal{%
I\rightarrow A}$ defined by 
\begin{equation*}
\mu :r\left( x\right) \left( x-1\right) \rightarrow r\left( x^{n}\right)
u\left( x\right)
\end{equation*}%
for all $r\left( x\right) \in \mathcal{A}$, where $u\left( x\right) $ is a
fixed non-zero element of $\mathcal{A}$. Since $\mathcal{I}$ is not $\mu $%
-invariant, $\gcd \left( -1+x,u\right) =1$; that is, $u\left( 1\right)
\not=0 $.

(\textbf{2}) \textbf{Assertion}: $\gcd \left( p,n\right) =1$.

\textbf{Proof. }Suppose $n=pn^{\prime }$. Then, as in the previous (iii.2),%
\begin{eqnarray*}
\mu &:&r\left( x\right) \left( x-1\right) .\left( x-1\right) \rightarrow
\left( r\left( x\right) \left( x-1\right) \right) ^{\alpha }.\left(
x-1\right) ^{\mu } \\
&=&\left( r\left( x^{n}\right) \left( x^{n}-1\right) \right) .u\left(
x\right) \\
&=&r\left( x^{n}\right) \left( x^{n^{\prime }}-1\right) ^{p}u\left( x\right)
=t\left( x\right) \left( x-1\right) ^{2}
\end{eqnarray*}%
which proves that $\mathcal{K=I}\left( x-1\right) ^{2}$ is $\mu $-invariant;
a contradiction.

(\textbf{3}) \textbf{Assertion}: $\mu $ is simple.

\textbf{Proof}. Denote $\frac{x^{n}-1}{x-1}$ by $\Phi _{n}\left( x\right) $.
Let $v=\left( x-1\right) ^{j}r\left( x\right) $ be a non-zero polynomial in $%
\mathcal{I}$ such that $j\geq 1$, $r\left( 1\right) \not=0$. We have%
\begin{eqnarray*}
\mu &:&\nu =\left( x-1\right) ^{j}r\left( x\right) =\left( x-1\right)
^{j-1}r\left( x\right) \left( x-1\right) \\
&\rightarrow &\nu _{1}=\left( x^{n}-1\right) ^{j-1}r\left( x^{n}\right)
u\left( x\right) \text{,} \\
&=&\left( x-1\right) ^{j-1}\left( \Phi _{n}\left( x\right) ^{j-1}u\left(
x\right) r\left( x^{n}\right) \right) ,
\end{eqnarray*}%
where%
\begin{equation*}
\Phi _{n}\left( 1\right) ^{j-1}u\left( 1\right) r\left( 1\right) \not=0\text{%
.}
\end{equation*}%
Thus, $\mu ^{j}:v\rightarrow v^{\prime }$ and $v^{\prime }\left( 1\right)
\not=0$; that is, $\left( v\right) ^{\mu ^{j}}\not\in \mathcal{I}$.

(\textbf{4}) Choose the transversal $\left\{ a^{i}\mid i=0,...,p-1\right\} $
for $H$ in $G$. Then on identifying $g$ with $g^{\varphi }$, the image of $G$
on the $p$-adic tree takes the form

\begin{equation*}
a=\left( 0,1,...,p-1\right) ,x=\left( x^{n},x^{n}a^{u\left( x\right)
},...,x^{n}a^{u\left( x\right) \left( p-1\right) }\right) \text{.}
\end{equation*}

\textbf{Proof.} Since $Ha^{i}x=Ha^{ic}$ $\left( 0\leq i\leq p-1\right) $,
the cofactor is $a^{i}xa^{-ic}=xa^{ix}a^{-ic}=xa^{i\left( x-c\right) }\in H$%
. Thus, 
\begin{equation*}
x^{\sigma }:i\rightarrow ic,\text{ }f\text{ }:xa^{i\left( x-c\right)
}\rightarrow x^{n}a^{iu(x)}\text{.}
\end{equation*}

\subsection{\textbf{Proof of Theorem 3}}

Recall Let $u\left( x\right) =1,n=1$.

(\textbf{1}) \textbf{Assertion} The homomorphism $f$ $:H\rightarrow G$ is
simple.

\textbf{Proof.} It is sufficient to prove the induced $\mu :\mathcal{%
I\rightarrow }\mathcal{A}$ is simple. A non-zero polynomial in $\mathcal{I}$
can be written as $\nu =r\left( x\right) \left( x-c\right) ^{j}$ with $j\geq
1$, $r\left( c\right) \not=0$.

We have%
\begin{eqnarray*}
\mu &:&\nu \left( x\right) =r\left( x\right) \left( x-c\right) ^{j}=r\left(
x\right) \left( x-c\right) ^{j-1}.\left( x-c\right) \\
&\rightarrow &\nu _{1}\left( x\right) =r\left( x\right) \left( x-c\right)
^{j-1}\text{.}
\end{eqnarray*}%
Thus, $\mu ^{j}:v\rightarrow v^{\prime }$ where $v^{\prime }\left( 1\right)
\not=0$.

(\textbf{2}) Choose the transversal $\left\{ a^{i}\mid i=0,...,p-1\right\} $
for $H$ in $G$. Then on identifying $g$ with $g^{\varphi }$, the
representation $\varphi $ of $G$ on the $p$-adic tree takes the form 
\begin{eqnarray*}
a &=&\left( 0,1,...,p-1\right) ,\text{ }x=\left( x,...,xa^{i},...,xa^{\left(
p-1\right) }\right) x^{\sigma }, \\
\text{ where }x^{\sigma } &:&i\rightarrow ic.
\end{eqnarray*}

\section{Representations of $G_{p,d}$ ($d\geq 2$) of Degree $p^{2}$}

If $\mathcal{C}$ is a group algebra, we denote its augmentation ideal by $%
\mathcal{C}^{\prime }$.

Let 
\begin{eqnarray*}
G &=&G_{p,d},\text{ }H=G^{\prime }Y,\text{ } \\
Y &=&\left\langle x_{1}^{p},x_{2},...,x_{d}\right\rangle , \\
X_{2} &=&\left\{ x_{2},...,x_{d}\right\} ,Z=\left\langle X_{2}\right\rangle 
\text{.}
\end{eqnarray*}%
Then, $\left[ G;H\right] =p^{2}$ and $A_{0}=A\cap H=G^{\prime }$. Also let $%
\alpha :Y\rightarrow X$ be the monomorphism defined by 
\begin{equation*}
x_{1}^{p}\rightarrow x_{2},\text{ }x_{j}\rightarrow x_{1+j}\text{ ( }2\leq
j\leq d-1\text{ ), }x_{d}\rightarrow x_{1}\text{.}
\end{equation*}

Let $\mathcal{A}=k\left( X\right) ,\mathcal{B}=k\left( Y\right) $; then, $%
\mathcal{A}=\sum_{0\leq i\leq p-1}\mathcal{B}x^{i}$. Also, $\mathcal{A}%
^{\prime }$ is the same as the ideal $\mathcal{I}$ generated by $\left\{
x_{1}-1,x_{2}-1,...,x_{d}-1\right\} $. We continue with: $A$ corresponds to $%
\mathcal{A}$ and $A_{0}$ to $\mathcal{I}$.

Given an integer $j$, write $j=j_{0}+j_{1}p$, $0\leq j_{0}\leq p-1$. Then,

\begin{equation*}
k\left\langle x\right\rangle ^{\prime }=k\left\langle x^{p}\right\rangle
^{\prime }\oplus \sum_{1\leq i\leq p-1}k\left\langle x^{p}\right\rangle
\left( x^{i}-1\right) \text{.}
\end{equation*}

\textbf{(1)} \textbf{Decomposition of\ }$\mathcal{I}$.

The ideal $\mathcal{I}$ decomposes as \textbf{\ } 
\begin{eqnarray*}
\mathcal{I} &\mathcal{=}&\sum \left\{ k\left( xz-1\right) \mid x\in
\left\langle x_{1}\right\rangle ,z\in Z\right\} \\
&=&\sum_{x\in \left\langle x_{1}\right\rangle }k\left( x-1\right) \oplus
\sum_{z\in Z}k\left( z-1\right) \oplus \sum_{x\in \left\langle
x_{1}\right\rangle ,z\in Z}k\left( x-1\right) \left( z-1\right)
\end{eqnarray*}%
and on substituting 
\begin{equation*}
\sum_{x\in \left\langle x_{1}\right\rangle }k\left( x-1\right)
=k\left\langle x_{1}^{p}\right\rangle ^{\prime }\oplus \sum_{1\leq i\leq
p-1}k\left\langle x_{1}^{p}\right\rangle \left( x_{1}^{i}-1\right) \text{,}
\end{equation*}%
the decomposition is refined to 
\begin{eqnarray*}
\mathcal{I} &=&k\left\langle x_{1}^{p}\right\rangle ^{\prime }\oplus
\sum_{1\leq i\leq p-1}k\left\langle x_{1}^{p}\right\rangle \left(
x_{1}^{i}-1\right) \\
&&\oplus \sum_{z\in Z}k\left( z-1\right) \oplus \sum_{z\in Z}k\left\langle
x_{1}^{p}\right\rangle ^{\prime }\left( z-1\right) \\
&&\oplus \sum_{z\in Z}\sum_{1\leq i\leq p-1}k\left\langle
x_{1}^{p}\right\rangle \left( x_{1}^{i}-1\right) \left( z-1\right) \text{.}
\end{eqnarray*}%
Therefore, an element $v$ in $\mathcal{I}$ has the unique form 
\begin{equation*}
\nu \mathcal{=}b_{0}+\sum_{1\leq i\leq p-1}b_{i}\left( x_{1}^{i}-1\right)
\oplus \sum_{z\in Z}a_{z}\left( z-1\right) \oplus \sum_{z\in Z}\sum_{0\leq
i\leq p-1}b_{i,z}\left( z-1\right) \left( x_{1}^{i}-1\right) \text{,}
\end{equation*}%
where 
\begin{equation*}
b_{0}\in k\left\langle x_{1}^{p}\right\rangle ^{\prime }\text{, }%
a_{z},b_{i},b_{i,z}\in k\left\langle x_{1}^{p}\right\rangle \text{.}
\end{equation*}

\textbf{(2)} \textbf{Definition of }$\mu $ on $\mathcal{I}$

Choose $\mu :\mathcal{B}^{\prime }\rightarrow 0$ and 
\begin{equation*}
\mu :x_{1}^{i}-1\rightarrow i\text{ for }1\leq i\leq p-1.
\end{equation*}%
Given the algebra homomorphism $\alpha :\mathcal{B}\rightarrow \mathcal{A}$,
we extend $\mu $ to an additive homomorphism $\mathcal{I\rightarrow A}$
satisfying the skew condition as follows:%
\begin{eqnarray*}
\nu &\mathcal{=}&b_{0}+\sum_{0\leq i\leq p-1}b_{i}\left( x_{1}^{i}-1\right)
+\sum_{z\in Z}a_{z}\left( z-1\right) \oplus \sum_{z\in Z}\sum_{0\leq i\leq
p-1}b_{i,z}\left( z-1\right) \left( x_{1}^{i}-1\right) \\
&\rightarrow & \\
&&\left( b_{0}\right) ^{\mu }+\sum_{0\leq i\leq p-1}\left( b_{i}\right)
^{\alpha }\left( x_{1}^{i}-1\right) ^{\mu }+\sum_{z\in Z}\left( a_{z}\right)
^{\alpha }\left( z-1\right) ^{\mu }+\sum_{z\in Z}\sum_{0\leq i\leq
p-1}\left( b_{i,z}\right) ^{\alpha }\left( z-1\right) ^{\alpha }\left(
x_{1}^{i}-1\right) ^{\mu } \\
&=&\sum_{0\leq i\leq p-1}i\left( b_{i}\right) ^{\alpha }+\sum_{z\in
Z}\sum_{0\leq i\leq p-1}i\left( b_{i,z}\right) ^{\alpha }\left( z-1\right)
^{\alpha }\text{.}
\end{eqnarray*}

\subsection{Proof of Theorem 4}

\begin{lemma}
Let $u,i\geq 1$ and write $u=u_{0}+u_{1}p,i=i_{0}+i_{1}p$ where $0\leq
u_{0},i_{0}\leq p-1$ and let $2\leq j$ $\leq d$. Then,%
\begin{eqnarray*}
\mu &:&x_{1}^{u_{0}+u_{1}p}-1\rightarrow u_{0}x_{2}^{u_{1}}, \\
x_{1}^{u}\left( x_{1}^{i}-1\right) &\rightarrow &\left( \left(
u_{0}+i_{0}\right) x_{2}^{i_{1}}-u_{0}\right) x_{2}^{u_{1}},
\end{eqnarray*}%
and%
\begin{eqnarray*}
x_{1}^{u}\left( x_{j}^{i}-1\right) &\rightarrow &u_{0}x_{2}^{u_{1}}\left(
x_{1+j}^{i}-1\right) , \\
x_{j}^{u}\left( x_{1}^{i}-1\right) &\rightarrow
&i_{0}x_{1+j}^{u}x_{2}^{i_{1}}
\end{eqnarray*}%
where $1+j$ is computed modulo $d$.
\end{lemma}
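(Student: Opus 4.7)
The plan is to verify each of the four identities by a uniform two-step template: first exploit the $p$-ary digit decomposition $x_{1}^{u} = x_{1}^{u_{1}p} \cdot x_{1}^{u_{0}}$ to split off a factor lying in $\mathcal{B}$ (or observe directly that $x_{j}^{u} \in \mathcal{B}$ for $j \geq 2$); then apply the skew condition $(r\nu)^{\mu} = r^{\alpha}\nu^{\mu}$ to push that factor through $\alpha$, reducing the computation to $\mu$ evaluated on a base element $x_{1}^{u_{0}} - 1$ with $0 \leq u_{0} \leq p-1$, where the definition of $\mu$ can be read off directly. The three ingredients I will repeatedly lean on are $\mu|_{\mathcal{B}'} = 0$, $\mu(x_{1}^{u_{0}} - 1) = u_{0}$ for $0 \leq u_{0} \leq p-1$, and the explicit action of $\alpha$ on $\mathcal{B}$ from the setup of Section 4, in particular $\alpha(x_{1}^{p}) = x_{2}$ and $\alpha(x_{j}) = x_{1+j}$.

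For the first identity I would write
\[
x_{1}^{u_{0}+u_{1}p} - 1 = x_{1}^{u_{1}p}(x_{1}^{u_{0}} - 1) + (x_{1}^{u_{1}p} - 1).
\]
The second summand lies in $\mathcal{B}'$ and is killed by $\mu$, while the skew condition applied to the first summand gives $(x_{1}^{u_{1}p})^{\alpha}\mu(x_{1}^{u_{0}} - 1) = x_{2}^{u_{1}} \cdot u_{0}$, as required. The second identity then follows by expanding $x_{1}^{u}(x_{1}^{i} - 1) = (x_{1}^{u+i} - 1) - (x_{1}^{u} - 1)$ and applying the first identity to each summand; in the non-carry regime $u_{0} + i_{0} < p$ the digits of $u+i$ are $(u+i)_{0} = u_{0}+i_{0}$ and $(u+i)_{1} = u_{1}+i_{1}$, and the two terms assemble precisely to $((u_{0}+i_{0})x_{2}^{i_{1}} - u_{0})x_{2}^{u_{1}}$.

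For the third identity, factor $x_{1}^{u} = x_{1}^{u_{1}p} \cdot x_{1}^{u_{0}}$ and apply the skew condition with $r = x_{1}^{u_{1}p} \in \mathcal{B}$ to reduce to $x_{2}^{u_{1}} \cdot \mu(x_{1}^{u_{0}}(x_{j}^{i} - 1))$. Then rewrite
\[
x_{1}^{u_{0}}(x_{j}^{i} - 1) = (x_{j}^{i} - 1) + (x_{j}^{i} - 1)(x_{1}^{u_{0}} - 1);
\]
the first term vanishes under $\mu$ because $x_{j}^{i} - 1 \in \mathcal{B}'$, and the second yields $(x_{1+j}^{i} - 1) \cdot u_{0}$ by a second application of the skew condition, now with $r = x_{j}^{i} - 1 \in \mathcal{B}$. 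Reassembling gives $u_{0}x_{2}^{u_{1}}(x_{1+j}^{i} - 1)$. The fourth identity is the cleanest: $x_{j}^{u}$ already lies in $\mathcal{B}$, so the skew condition and the first identity together give $\mu(x_{j}^{u}(x_{1}^{i} - 1)) = x_{1+j}^{u} \cdot \mu(x_{1}^{i} - 1) = i_{0}x_{1+j}^{u}x_{2}^{i_{1}}$.

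The main obstacle is not conceptual but bookkeeping: the entire lemma is an exercise in moving elements across the skew condition while carefully tracking $p$-ary digit decompositions. The one genuinely delicate point is in the second identity, where the stated closed form implicitly rests on $u_{0} + i_{0} < p$, so that the digits of $u + i$ are the naive sums of those of $u$ and $i$; before committing to the form as written I would confirm this is the regime the authors intend, noting that the carry case can be treated by an identical argument and differs only by a shift in the exponent of $x_{2}$.
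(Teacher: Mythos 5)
Your proposal is correct and follows essentially the same route as the paper: split off the $\mathcal{B}$-part of each product via the $p$-ary digits, push it through $\alpha$ by the skew condition, and evaluate $\mu$ on $x_{1}^{u_{0}}-1$ and on $\mathcal{B}^{\prime}$, with the second and fourth identities handled exactly as in the paper's proof. Your caveat about the carry case in the second identity is well taken: the paper's own verification only covers $u_{0}+i_{0}\leq p$ (where the stated formula survives because the coefficient $u_{0}+i_{0}$ vanishes mod $p$ when $u_{0}+i_{0}=p$), and for $u_{0}+i_{0}>p$ the exponent of $x_{2}$ shifts exactly as you predict.
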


\begin{proof}
Then, 
\begin{equation*}
\mu :x_{1}^{p}\left( x_{1}-1\right) \rightarrow x_{2}.1=x_{2};
\end{equation*}%
\begin{eqnarray*}
\mu &:&x_{1}^{u_{0}+u_{1}p}-1=\left( x_{1}^{u_{1}p}-1\right) \left(
x_{1}^{u_{0}}-1\right) +\left( x_{1}^{u_{1}p}-1\right) +\left(
x_{1}^{u_{0}}-1\right) \\
&\rightarrow &\left( x_{2}^{u_{1}}-1\right) u_{0}+0+u_{0}=u_{0}x_{2}^{u_{1}}%
\text{;}
\end{eqnarray*}%
\begin{eqnarray*}
\mu &:&x_{1}^{u_{0}+u_{1}p}\left( x_{1}-1\right) =x_{1}^{\left(
u_{0}+1\right) +u_{1}p}-x_{1}^{u_{0}+u_{1}p} \\
&=&\left( x_{1}^{\left( u_{0}+1\right) +u_{1}p}-1\right) -\left(
x_{1}^{u_{0}+u_{1}p}-1\right) \\
&\rightarrow &\left( u_{0}+1\right)
x_{2}^{u_{1}}-u_{0}x_{2}^{u_{1}}=x_{2}^{u_{1}}\text{ if }u_{0}+1\leq p-1;
\end{eqnarray*}%
\begin{eqnarray*}
\mu &:&x_{1}^{\left( p-1\right) +u_{1}p}\left( x_{1}-1\right) =\left(
x_{1}^{\left( u_{1}+1\right) p}-1\right) -\left( x_{1}^{\left( p-1\right)
+u_{1}p}-1\right) \\
&\rightarrow &0-\left( p-1\right) x_{2}^{u_{1}}=x_{2}^{u_{1}}\text{;}
\end{eqnarray*}%
that is, for all $u$, 
\begin{equation*}
\mu :x_{1}^{u}\left( x_{1}-1\right) \rightarrow x_{2}^{u_{1}}\text{.}
\end{equation*}%
More generally,%
\begin{eqnarray*}
\mu &:&x_{1}^{u}\left( x_{1}^{i}-1\right) =x_{1}^{u+i}-x_{1}^{u}=\left(
x_{1}^{u+i}-1\right) -\left( x_{1}^{u}-1\right) \\
&\rightarrow &\left( u+i\right) _{0}.x_{2}^{\left( u+i\right)
_{1}}-u_{0}x_{2}^{u_{1}} \\
&=&\left( u_{0}+i_{0}\right) x_{2}^{u_{1}+i_{1}}-u_{0}x_{2}^{u_{1}}\text{ if 
}\left( u+i\right) _{0}\leq p-1; \\
\mu &:&x_{1}^{u}\left( x_{1}^{i}-1\right) \rightarrow -u_{0}x_{2}^{u_{1}}%
\text{ if }\left( u+i\right) _{0}=p\text{.}
\end{eqnarray*}%
In all cases,%
\begin{equation*}
x_{1}^{u}\left( x_{1}^{i}-1\right) \rightarrow \left( u_{0}+i_{0}\right)
x_{2}^{u_{1}+i_{1}}-u_{0}x_{2}^{u_{1}}\text{.}
\end{equation*}%
Also, for $2\leq j\leq d$,%
\begin{equation*}
x_{1}^{u}\left( x_{j}^{i}-1\right) =\left( x_{1}^{u}-1\right) \left(
x_{j}^{i}-1\right) +\left( x_{j}^{i}-1\right) \rightarrow \left(
x_{j}^{i}-1\right) ^{\alpha }\left( x_{1}^{u}-1\right) ^{\mu }=\left(
x_{j+1}^{i}-1\right) u_{0}x_{2}^{u_{1}};
\end{equation*}%
\begin{equation*}
x_{j}^{u}\left( x_{1}^{i}-1\right) \rightarrow \left( x_{j+1}^{u}\right)
i_{0}x_{2}^{i_{1}}=i_{0}x_{j+1}^{u}x_{2}^{i_{1}}\text{.}
\end{equation*}
\end{proof}

\begin{lemma}
Let $q\left( x\right) =c_{0}+c_{1}x+..+c_{s}x^{s}\in k\left[ x\right] $, and
let $0\leq u=$ $u_{0}+u_{1}p$ where $0\leq u_{0}\leq p-1$. Suppose $q\left(
x_{1}\right) \in \mathcal{I}$. Then, $\sum_{0\leq i\leq s}c_{i}=0$ and 
\begin{equation*}
\mu :q\left( x_{1}\right) \rightarrow \sum_{1\leq i\leq
s}c_{i}i_{0}x_{2}^{i_{1}}.
\end{equation*}%
Furthermore,%
\begin{equation*}
\left( x_{1}^{u}q\left( x_{1}\right) \right) ^{\mu }-x_{2}^{u_{1}}.q\left(
x_{1}\right) ^{\mu }=u_{0}\left( \sum_{1\leq i\leq s}c_{i}\left(
x_{2}^{i_{1}}-1\right) \right) x_{2}^{u_{1}}\text{.}
\end{equation*}
\end{lemma}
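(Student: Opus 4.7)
The plan is to reduce every assertion to the formulas already established in the preceding lemma, exploiting additivity of $\mu$ together with the identity that rewrites $q(x_1)$ as a $k$-linear combination of the generators $x_1^i - 1$ of the augmentation ideal.

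The equality $\sum_{0 \leq i \leq s} c_i = 0$ is immediate: since $\mathcal{I} = \mathcal{A}^{\prime}$ is the augmentation ideal of $\mathcal{A}$, applying the augmentation map to $q(x_1) \in \mathcal{I}$ yields $q(1) = \sum c_i = 0$. I would then rewrite
\begin{equation*}
q(x_1) = \sum_{0 \leq i \leq s} c_i(x_1^i - 1) = \sum_{1 \leq i \leq s} c_i(x_1^i - 1),
\end{equation*}
the first equality using $\sum c_i = 0$ and the second dropping the vanishing $i = 0$ term. The preceding lemma with $u = 0$ gives $\mu(x_1^i - 1) = i_0 x_2^{i_1}$, so by additivity $q(x_1)^{\mu} = \sum_{1 \leq i \leq s} c_i i_0 x_2^{i_1}$, which is the first displayed claim.

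For the second claim I would expand $x_1^u q(x_1) = \sum_{1 \leq i \leq s} c_i\, x_1^u (x_1^i - 1)$ and apply the preceding lemma's formula $\mu(x_1^u(x_1^i - 1)) = ((u_0 + i_0)x_2^{i_1} - u_0)x_2^{u_1}$. The elementary algebraic identity
\begin{equation*}
((u_0 + i_0)x_2^{i_1} - u_0)x_2^{u_1} = i_0 x_2^{i_1} x_2^{u_1} + u_0(x_2^{i_1} - 1)x_2^{u_1}
\end{equation*}
cleanly separates this into two pieces; summing the first over $i$ reproduces $x_2^{u_1} q(x_1)^{\mu}$, while summing the second yields $u_0 x_2^{u_1} \sum_{1 \leq i \leq s} c_i (x_2^{i_1} - 1)$. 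Transposing the $x_2^{u_1} q(x_1)^{\mu}$ contribution to the other side delivers the stated identity exactly.

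The argument is essentially bookkeeping rather than conceptual. The one pitfall worth flagging is the asymmetric treatment of the $i = 0$ summand (both $i_0 = 0$ and $x_1^0 - 1 = 0$, but $x_2^{i_1} = x_2^0 = 1 \neq 0$); this is handled cleanly by extracting the augmentation identity $\sum c_i = 0$ at the outset and using it to restrict every subsequent sum to indices $1 \leq i \leq s$.
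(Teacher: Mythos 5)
Your proposal is correct and follows essentially the same route as the paper: extract $\sum_i c_i=0$ from membership in the augmentation ideal, write $q(x_1)=\sum_{1\leq i\leq s}c_i\left(x_1^{i}-1\right)$, and apply the formulas of the preceding lemma termwise, regrouping via the identity $\left(\left(u_0+i_0\right)x_2^{i_1}-u_0\right)x_2^{u_1}=i_0x_2^{i_1}x_2^{u_1}+u_0\left(x_2^{i_1}-1\right)x_2^{u_1}$, which is exactly the paper's computation. No gaps.
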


\begin{proof}
Let $1\leq $ $j\leq d$. Then,%
\begin{equation*}
q\left( x\right) =c_{0}+c_{1}x+..+c_{s}x^{s}=\sum_{0\leq i\leq
s}c_{i}+\sum_{1\leq i\leq s}c_{i}\left( x^{i}-1\right) \text{,}
\end{equation*}%
and clearly, $q\left( x_{1}\right) \in \mathcal{I}$ if and only if $%
\sum_{0\leq i\leq s}c_{i}=0$. Therefore, $q\left( x_{1}\right) \in \mathcal{I%
}$ implies 
\begin{equation*}
\mu :q\left( x_{1}\right) =\sum_{1\leq i\leq s}c_{i}\left(
x_{1}^{i}-1\right) \rightarrow \sum_{1\leq i\leq s}c_{i}i_{0}x_{2}^{i_{1}}%
\text{.}
\end{equation*}%
Next, 
\begin{eqnarray*}
\mu &:&x_{1}^{u}q\left( x_{1}\right) =\sum_{1\leq i\leq s}c_{i}\left(
x_{1}^{u}\left( x_{1}^{i}-1\right) \right) \rightarrow \sum_{1\leq i\leq
s}c_{i}\left( \left( u_{0}+i_{0}\right) x_{2}^{i_{1}}-u_{0}\right)
x_{2}^{u_{1}} \\
&=&\left( \sum_{1\leq i\leq
s}c_{i}u_{0}x_{2}^{i_{1}}+c_{i}i_{0}x_{2}^{i_{1}}-c_{i}u_{0}\right)
x_{2}^{u_{1}} \\
&=&\left( u_{0}\sum_{1\leq i\leq s}c_{i}\left( x_{2}^{i_{1}}-1\right)
+\sum_{1\leq i\leq s}c_{i}i_{0}x_{2}^{i_{1}}\right) x_{2}^{u_{1}} \\
&=&\left( u_{0}\sum_{1\leq i\leq s}c_{i}\left( x_{2}^{i_{1}}-1\right)
+q\left( x_{1}\right) ^{\mu }\right) x_{2}^{u_{1}}\text{;}
\end{eqnarray*}%
therefore,%
\begin{equation*}
\left( x_{1}^{u}q\left( x_{1}\right) \right) ^{\mu }-x_{2}^{u_{1}}.q\left(
x_{1}\right) ^{\mu }=u_{0}\left( \sum_{1\leq i\leq s}c_{i}\left(
x_{2}^{i_{1}}-1\right) \right) x_{2}^{u_{1}}\text{.}
\end{equation*}
\end{proof}

\begin{lemma}
Let $\mathcal{K}$ be an invariant $\mu $-ideal. If $q\left( x_{j}\right) \in 
\mathcal{K}$ for some $j$ then $q\left( x_{j}\right) =0$.
\end{lemma}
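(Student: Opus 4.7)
The plan is to induct strongly on $s=\deg q$ after reducing to the case $j=1$. The reduction uses a cycling argument: for $j\in\{2,\ldots,d\}$, the condition $q(x_j)\in\mathcal{K}\subseteq\mathcal{I}$ gives $q(1)=\sum c_i=0$, so $x_1\,q(x_j)=\sum c_i\, x_1(x_j^i-1)\in\mathcal{K}$. Applying Lemma 1, $\mu(x_1(x_j^i-1))=x_{j+1}^i-1$ (with $j+1$ taken modulo $d$, so $x_{d+1}=x_1$), whence $\mu(x_1\, q(x_j))=q(x_{j+1})\in\mathcal{K}$. Iterating along the cycle $j\to j+1\to\cdots\to d\to 1$ delivers $q(x_1)\in\mathcal{K}$, so we may assume $j=1$.

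The base cases $s=0$ (a constant in $\mathcal{I}$ is zero) and $s=1$ (where $q=c(x_1-1)$ and $\mu(q)=c\in k\cap\mathcal{K}\subseteq\{0\}$) force $q=0$ immediately.

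For the inductive step with $s\ge 2$, I consider the $p$ polynomials $\phi_u(x_2):=\mu(x_1^u q(x_1))\in\mathcal{K}$ for $u\in\{0,1,\ldots,p-1\}$. Using $\mu(x_1^n-1)=n_0 x_2^{n_1}$ together with $\sum c_i=0$, one obtains
\[
\phi_u(x_2)=\sum_i c_i\,(u+i)_0\, x_2^{(u+i)_1},
\]
whose $x_2$-degree is at most $\lceil s/p\rceil<s$ for $s\ge 2$ and $p\ge 2$. Applying the cycling step to $\phi_u(x_2)\in\mathcal{K}$ produces $\phi_u(x_1)\in\mathcal{K}$ of degree strictly less than $s$, and the inductive hypothesis forces $\phi_u\equiv 0$ for each $u$. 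Equating the coefficient of $x_2^m$ in $\phi_u=0$ to zero and substituting $w=mp-u$ (so that $w$ ranges over all of $\mathbb{Z}$) yields the recurrence
\[
s_w:=\sum_{k=0}^{p-1}k\, c_{w+k}=0\qquad\text{for every }w\in\mathbb{Z}.
\]
Computing $s_w-s_{w+1}$ and using $p\equiv 0$ in $k$ gives $\sigma_w:=\sum_{k=0}^{p-1}c_{w+k}=0$ for every $w$; a further difference $\sigma_w-\sigma_{w+1}=c_w-c_{w+p}$ shows $c$ is $p$-periodic. Since $c$ has finite support, $c\equiv 0$ and hence $q=0$.

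The main obstacle is the inductive step: a single $\mu$-image $\mu(q(x_1))$ can vanish while $q\ne 0$ (for instance $q=1+x_1+x_1^2$ with $p=3$ satisfies $\mu(q)=0$), so one must produce the full family $\{\phi_u:0\le u<p\}$ and verify that the resulting linear recurrences on the coefficients of $q$ have no nonzero finite-support solution.
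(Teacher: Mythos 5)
Your proof is correct, and it shares the paper's two structural devices: the cycling reduction from $q(x_j)$ to $q(x_1)$ by multiplying into $\mathcal{K}$ and applying $\mu$ (you use $x_1 q(x_j)$, the paper uses $q(x_j)(x_1^u-1)$), and the family of images $\mu\bigl(x_1^u q(x_1)\bigr)$ for $0\le u\le p-1$. Where you genuinely diverge is the endgame. The paper takes a minimal counterexample with respect to the weight $\lambda(q)=\sum\{i\mid c_i\ne 0\}$, extracts $l(x_2)=\sum c_i\bigl(x_2^{i_1}-1\bigr)\in\mathcal{K}$ from the identity of its second lemma, and then splits into the cases $\lambda(l)=\lambda(q)$ and $l=0$, the latter requiring the partition of the support into the classes $L_j$. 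You instead induct on the ordinary degree $s$: the bound $\deg_{x_2}\mu\bigl(x_1^u q(x_1)\bigr)\le\lceil s/p\rceil<s$ for $s\ge 2$ lets the inductive hypothesis annihilate all $p$ images at once, and the resulting relations $\sum_{k=0}^{p-1}k\,c_{w+k}=0$ for all $w\in\mathbb{Z}$ collapse under two finite differences, using $p=0$ in $k$ and the finite support of the coefficients. This buys you a shorter, more mechanical conclusion with no case analysis, at the price of having to argue that the full family of $p$ recurrences has no nonzero finite-support solution --- which you do, and your closing remark correctly identifies why a single vanishing image would not suffice. One further point in your favour: your formula $\mu\bigl(x_1^u(x_1^i-1)\bigr)=(u+i)_0x_2^{(u+i)_1}-u_0x_2^{u_1}$ correctly records the carry in the exponent when $u_0+i_0>p$, whereas the paper's ``in all cases'' formula writes $x_2^{u_1+i_1}$ there; your version is the safe one to compute with.
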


\begin{proof}
Given $q\left( x\right) =c_{0}+c_{1}x+..+c_{s}x^{s}\not=0$, define $e\left(
q\right) =\left\{ i\mid c_{i}\not=0\right\} $ and $\lambda \left( q\left(
x\right) \right) =\sum \left\{ i\mid i\in e\left( q\right) \right\} $. On
writing $0\leq $ $i=i_{0}+i_{1}p$ $\left( 0\leq i_{0}\leq p-1\right) $, 
\begin{equation*}
\lambda \left( q\left( x\right) \right) =\sum \left\{ i_{0}\mid i\in e\left(
q\right) \right\} +\left( \sum \left\{ i_{1}\mid i\in e\left( q\right)
\right\} \right) p.
\end{equation*}

Suppose there exists a nonzero polynomial $q\left( x\right) $ such that $%
q\left( x_{j}\right) \in $ $\mathcal{K}$ for some $j$ and choose one with
minimum $\lambda \left( q\left( x\right) \right) $. We may assume $%
c_{0}\not=0$.

(\textbf{1}) Suppose $j=1$. $q\left( x_{1}\right) \in $ $\mathcal{K}$ . Then
on choosing $u_{0}\not=0$, by the previous lemma, 
\begin{equation*}
\left( x_{1}^{u}q\left( x_{1}\right) \right) ^{\mu }-x_{2}^{u_{1}}.q\left(
x_{1}\right) ^{\mu }=u_{0}\left( \sum_{1\leq i\leq s}c_{i}\left(
x_{2}^{i_{1}}-1\right) \right) x_{2}^{u_{1}}\in \mathcal{K}
\end{equation*}%
and therefore, $l\left( x_{2}\right) =\sum_{1\leq i\leq s}c_{i}\left(
x_{2}^{i_{1}}-1\right) \in \mathcal{K}$ and $\lambda \left( l\left( x\right)
\right) \leq \sum \left\{ i_{1}\mid i\in e\left( q\right) \right\} $. Thus,
either $\lambda \left( q\left( x\right) \right) =\lambda \left( l\left(
x\right) \right) $ or $l\left( x_{2}\right) =0$. In the first case,%
\begin{equation*}
\sum \left\{ i_{0}\mid i\in e\left( q\right) \right\} +\left( \sum \left\{
i_{1}\mid i\in e\left( q\right) \right\} \right) p\leq \sum \left\{
i_{1}\mid i\in e\left( q\right) \right\} ,
\end{equation*}%
\begin{eqnarray*}
&& \\
\sum \left\{ i_{0}\mid i\in e\left( q\right) \right\} &\leq &\left( \sum
\left\{ i_{1}\mid i\in e\left( q\right) \right\} \right) \left( 1-q\right) ,
\\
\sum \left\{ i_{0}\mid i\in e\left( q\right) \right\} \text{ } &=&\sum
\left\{ i_{1}\mid i\in e\left( q\right) \right\} =0, \\
q\left( x_{1}\right) &=&c_{0}\in \mathcal{K}\text{.}
\end{eqnarray*}
Thus, $c_{0}=0$; contradiction.

In the second case, 
\begin{eqnarray*}
l\left( x_{2}\right) &=&\sum_{1\leq i\leq s}c_{i}\left(
x_{2}^{i_{1}}-1\right) =0, \\
l\left( x_{2}\right) &=&l\left( x_{1}\right) ^{\alpha }=0.
\end{eqnarray*}%
and as $\alpha $ is a monomorphism, it follows that $l\left( x_{1}\right) =0$%
.

Define $L_{j}=\left\{ i\in e\left( q\right) \mid i_{1}=j\right\} $ and let $%
t $ be such that $p^{t}\leq s<p^{t+1}$. Then,

\begin{eqnarray*}
l\left( x_{1}\right) &=&\sum_{1\leq i\leq s}c_{i}\left(
x_{1}^{i_{1}}-1\right) =\sum_{i\in L_{0}}c_{i}\left( x_{1}^{i_{1}}-1\right)
+\sum_{i\in L_{1}}c_{i}\left( x_{1}^{i_{1}}-1\right) +...+\sum_{i\in
L_{t}}c_{i}\left( x_{1}^{i_{1}}-1\right) \\
&=&\sum_{i\in L_{1}}c_{i}\left( x_{1}-1\right) +\sum_{i\in L_{2}}c_{i}\left(
x_{1}^{2}-1\right) +...+\sum_{i\in L_{t}}c_{i}\left( x_{1}^{t}-1\right) \\
&=&\left( \sum_{1\leq i\leq p-1}c_{i}\right) \left( x_{1}-1\right) +\left(
\sum_{0\leq i\leq p-1}c_{p+i}\right) \left( x_{1}^{2}-1\right) +...+\left(
\sum_{0\leq i\leq p-1}c_{p^{t}+i}\right) \left( x_{1}^{s}-1\right) =0
\end{eqnarray*}%
and%
\begin{eqnarray*}
\left( \sum_{1\leq i\leq p-1}c_{i}\right) &=&\left( \sum_{0\leq i\leq
p-1}c_{p+i}\right) =...=\left( \sum_{0\leq i\leq p-1}c_{p^{t}+i}\right) =0,
\\
\sum_{1\leq i\leq s}c_{i} &=&0\text{.}
\end{eqnarray*}%
Since $\sum_{0\leq i\leq s}c_{i}=0$, we reach $c_{0}=0$; a contradiction.

(\textbf{2}) Suppose $q\left( x_{j}\right) \in \mathcal{K}$ for some $2\leq
j\leq d$. Then, $q\left( x_{j}\right) \left( x_{1}^{u}-1\right) \in \mathcal{%
K}$ and $\mu :q\left( x_{j}\right) \left( x_{1}^{u}-1\right) \rightarrow
q\left( x_{j+1}\right) u_{0}x_{2}^{u_{1}}$; therefore $q\left(
x_{j+1}\right) \in \mathcal{K}$ which leads us back to $q\left( x_{1}\right)
\in \mathcal{K}.$
\end{proof}

\begin{lemma}
$\mathcal{K}=\left\{ 0\right\} $.
\end{lemma}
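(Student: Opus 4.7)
The plan is to argue by contradiction, assuming there exists a nonzero $\nu \in \mathcal{K}$ and deriving a contradiction with Lemma 3, which forbids nonzero single-variable polynomials from lying in $\mathcal{K}$. The strategy is to exploit the closure of $\mathcal{K}$ under two operations—multiplication by arbitrary elements of $\mathcal{A}$ and the map $\mu$—to construct an element of $\mathcal{K} \cap k[x_j^{\pm 1}]$ for some $j$ that is not zero.

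The technical engine is the following global formula, which should be derived at the start of the proof: writing $\nu \in \mathcal{I}$ uniquely as $\nu = \sum_{i=0}^{p-1} \nu_i x_1^i$ with $\nu_i \in \mathcal{B}$, the definition of $\mu$ together with the skew condition yields $\mu(\nu) = \sum_{i=1}^{p-1} i \nu_i^\alpha$, and the skew relation $\mu(w\nu) = w^\alpha \mu(\nu)$ for $w \in \mathcal{B}$. Combining these with the ideal-multiplication $\nu \mapsto x_1 \nu$ produces the contraction identity $\mu(x_1 \nu) - \mu(\nu) = (\sum_{j=0}^{p-1} \nu_j)^\alpha$, which gives a new element of $\mathcal{K}$ obtained by summing the $\mathcal{B}$-components of $\nu$ and then applying the cyclic shift $\alpha$. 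Given $\nu \in \mathcal{K} \setminus \{0\}$, I would iteratively apply multiplication, $\mu$, and this contraction to produce elements of $\mathcal{K}$ whose supports involve progressively fewer of the $d$ variables; since $\alpha$ cyclically permutes $x_1^p, x_2, \ldots, x_d$, the reduction can be targeted at each variable in turn. After at most $d-1$ reductions we reach an element of $\mathcal{K} \cap k[x_j^{\pm 1}]$, which Lemma 3 forces to be zero—giving the contradiction provided we can keep the reduction nonzero.

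The hard part will be ensuring non-vanishing: the contraction operation $\nu \mapsto \sigma(\nu)^\alpha$ vanishes precisely when $\sum_j \nu_j = 0$, and the iterated $\mu$ vanishes whenever $\nu$ is divisible by a high power of $(x_1-1)$. The resolution should exploit the characteristic-$p$ identity $(x_1-1)^p = x_1^p - 1$ together with the fact that $\alpha(x_1^p - 1) = x_2-1$: divisibility of $\nu$ by $(x_1-1)^{kp}$ factors $\nu$ as $(x_1^p-1)^k \eta$, and by skew this converts to divisibility of $\mu(\nu)$ by $(x_2-1)^k$—so the obstruction simply cycles to the next variable. Iterating this observation around the $\alpha$-cycle, together with Noetherianity of $\mathcal{A}$ (no nonzero Laurent polynomial is divisible by unboundedly many irreducibles from a fixed cycle of coprime factors), forces $\nu = 0$, contradicting our choice and completing the proof.
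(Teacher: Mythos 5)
Your two computational ingredients are correct and are in fact the same basic toolkit the paper uses: writing $\nu=\sum_{i=0}^{p-1}\nu_{i}x_{1}^{i}$ with $\nu_{i}\in \mathcal{B}$ one does get $\nu^{\mu }=\sum_{1\leq i\leq p-1}i\nu_{i}^{\alpha }$, and your contraction identity $\left( x_{1}\nu \right) ^{\mu }-\nu ^{\mu }=\left( \sum_{j}\nu_{j}\right) ^{\alpha }$ is essentially the $j=1$ instance of the paper's elements $V_{j}$, obtained from $\mu \left( w\left( x_{1}^{j}-1\right) \right) $. But the proposal stops where the proof actually begins, and two of its structural claims do not hold. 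First, the contraction does \emph{not} reduce the number of variables: $\sum_{j}\nu_{j}$ lies in $\mathcal{B}=k[x_{1}^{\pm p},x_{2}^{\pm 1},\ldots ,x_{d}^{\pm 1}]$ and $\alpha $ sends $x_{d}\mapsto x_{1}$, so $\left( \sum_{j}\nu_{j}\right) ^{\alpha }$ in general involves all $d$ variables again; the promised ``at most $d-1$ reductions'' to an element of $\mathcal{K}\cap k[x_{j}^{\pm 1}]$ does not materialize. The paper gets this reduction only by a double minimality argument (first on the number of variables occurring in a nonzero element of $\mathcal{K}$, then on its total degree $\delta $), and your sketch invokes no minimality at all, so you have no mechanism for concluding anything when a produced element happens to be zero or fails to be simpler.

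Second, the non-vanishing issue you flag as ``the hard part'' is not resolved by the characteristic-$p$ factorization you propose. The contraction vanishes exactly when $\sum_{j}\nu_{j}=0$, and a direct computation shows this happens precisely when $\nu \in \left( x_{1}-1\right) \left( \mathcal{B}+\mathcal{B}x_{1}+\cdots +\mathcal{B}x_{1}^{p-2}\right) $ --- a far larger set than the polynomials divisible by $\left( x_{1}-1\right) ^{kp}$, so the ``obstruction cycles to the next variable'' heuristic and the appeal to Noetherianity address only one special failure mode. The paper confronts this degenerate case head-on: when $V_{j}=0$ for all $1\leq j\leq p-1$ it applies $\alpha ^{-1}$ (using that $\alpha $ is a monomorphism) and reads off from the resulting linear system, specifically from $V_{p-1}$, that the bottom coefficient $w_{0}$ vanishes, contradicting minimality; and when some $V_{j}\neq 0$ it uses the degree inequality $s\leq \left[ \frac{s+j}{p}\right] $ to force $s=1$, a case disposed of separately. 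Nothing in your sketch substitutes for either branch of that case analysis, so as written the argument has a genuine gap rather than being a different complete proof.
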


\begin{proof}
Suppose $\mathcal{K}\not=\left\{ 0\right\} $. Choose a polynomial $w\not=0$
in $\mathcal{K}$ having a minimum number of variables. Using the argument in
(\textbf{2}) above, we may assume one of the variables to be $x_{1}$. Let $%
\delta _{i}\left( w\right) $ be the $x_{i}$th degree of $w$ and 
\begin{equation*}
\delta \left( w\right) =\sum_{1\leq i\leq d}\delta _{i}\left( w\right) 
\end{equation*}%
the total degree of $w$. Then $\delta \left( w\right) \not=0$. Choose $w$
having minimum $\delta \left( w\right) $.

(\textbf{1}) Write $w=w\left( x_{1},x_{j_{1}},...,x_{j_{t}}\right)
=\sum_{i=0,...,s}w_{i}\left( x_{j_{1}},...,x_{j_{t}}\right) x_{1}^{i}$ where 
$2\leq j_{1}<...<j_{t}\leq d$. Then, 
\begin{equation*}
\delta \left( w\right) =\sum_{j\not=1}\delta _{j}\left( w\right) +s\text{.}
\end{equation*}%
We note that in 
\begin{equation*}
w=\sum_{i=0,...,s}w_{i}\left( x_{j_{1}},...,x_{j_{t}}\right)
+\sum_{i=1,...,s}w_{i}\left( x_{j_{1}},...,x_{j_{t}}\right) \left(
x_{1}^{i}-1\right)
\end{equation*}%
$w\in \mathcal{K}$ and second term on the right hand side is in $\mathcal{I}$%
; it follows that the first term is in $\mathcal{B}^{\prime }$.

Thus,%
\begin{eqnarray*}
\mu &:&w\rightarrow 0+\sum_{i=1,...,s}w_{i}\left(
x_{j_{1}+1},...,x_{j_{t}+1}\right) \left( i_{0}x_{2}^{i_{1}}\right) \\
&=&\sum_{i=1,...,s}w_{i}\left( x_{j_{1}+1},...,x_{j_{t}+1}\right) ix_{2}^{ 
\left[ \frac{i}{p}\right] }
\end{eqnarray*}%
and 
\begin{equation*}
\delta \left( w^{\mu }\right) \leq \sum_{j\not=1}\delta _{j}\left( w\right)
+ \left[ \frac{s}{p}\right] \leq \delta \left( w\right)
=\sum_{j\not=1}\delta _{j}\left( w\right) +s.
\end{equation*}%
Since $w^{\mu }\in \mathcal{K}$, by the minimality of $\delta \left(
w\right) $, we conclude 
\begin{equation*}
w^{\mu }=0\text{ or }1\leq s\leq p-1\text{.}
\end{equation*}

(\textbf{1.1}) Suppose $s=1$. Then, $w=w_{0}\left(
x_{j_{1}},...,x_{j_{t}}\right) +w_{1}\left( x_{j_{1}},...,x_{j_{t}}\right)
x_{1}$ and $w_{0}\not=0\not=w_{1}$. Since $w^{\mu }=w_{1}\left(
x_{j_{1}+1},...,x_{j_{t}+1}\right) \in \mathcal{K}$, by the minimality of $w$%
, we deduce $w_{1}\left( x_{j_{1}+1},...,x_{j_{t}+1}\right) =0$. However, as 
$w^{\mu }=w_{1}\left( x_{j_{1}+1},...,x_{j_{t}+1}\right) =w_{1}\left(
x_{j_{1}},...,x_{j_{t}}\right) ^{\alpha }$ and $\alpha $ monomorphism, we
have $w_{1}\left( x_{j_{1}},...,x_{j_{t}}\right) =0$; contradiction.

(\textbf{2}) Define $W_{j}=w.\left( x_{1}^{j}-1\right) $ for $1\leq j\leq
p-1 $. We apply the above analysis to $W_{j}$.

Here \ we have 
\begin{eqnarray*}
W_{j} &=&w.\left( x_{1}^{j}-1\right) =\left( \sum_{i=0,...,s}w_{i}\left(
x_{j_{1}},...,x_{j_{t}}\right) +\sum_{i=1,...,s}w_{i}\left(
x_{j_{1}},...,x_{j_{t}}\right) \left( x_{1}^{i}-1\right) \right) \left(
x_{1}^{j}-1\right) \\
&=&\sum_{i=0,...,s}w_{i}\left( x_{j_{1}},...,x_{j_{t}}\right) \left(
x_{1}^{j}-1\right) +\sum_{i=1,...,s}w_{i}\left(
x_{j_{1}},...,x_{j_{t}}\right) \left( x_{1}^{i}-1\right) \left(
x_{1}^{j}-1\right) \\
&=&\sum_{i=0,...,s}w_{i}\left( x_{j_{1}},...,x_{j_{t}}\right) \left(
x_{1}^{j}-1\right) \\
&&+\sum_{i=1,...,s}w_{i}\left( x_{j_{1}},...,x_{j_{t}}\right) \left( \left(
x_{1}^{i+j}-1\right) -\left( x_{1}^{i}-1\right) -\left( x_{1}^{j}-1\right)
\right) \text{.}
\end{eqnarray*}%
Since $\left[ \frac{j}{p}\right] =0$, we have 
\begin{eqnarray*}
\mu &:&W_{j}\rightarrow \sum_{i=0,...,s}w_{i}\left(
x_{j_{1}+1},...,x_{j_{t}+1}\right) j_{0}x_{2}^{\left[ \frac{j}{p}\right] } \\
&&+\sum_{i=1,...,s}w_{i}\left( x_{j_{1}+1},...,x_{j_{t}+1}\right) \left(
\left( i+j\right) _{0}x_{2}^{\left[ \frac{i+j}{p}\right] }-i_{0}x_{2}^{\left[
\frac{i}{p}\right] }-j_{0}x_{2}^{\left[ \frac{j}{p}\right] }\right) \\
&=&\sum_{i=0,...,s}w_{i}\left( x_{j_{1}+1},...,x_{j_{t}+1}\right)
-\sum_{i=1,...,s}w_{i}\left( x_{j_{1}+1},...,x_{j_{t}+1}\right) \\
&&+\sum_{i=1,...,s}w_{i}\left( x_{j_{1}+1},...,x_{j_{t}+1}\right) \left(
i+j\right) _{0}x_{2}^{\left[ \frac{i+j}{p}\right] }-\sum_{i=1,...,s}w_{i}%
\left( x_{j_{1}+1},...,x_{j_{t}+1}\right) i_{0}x_{2}^{\left[ \frac{i}{p}%
\right] } \\
&=&w_{0}\left( x_{j_{1}+1},...,x_{j_{t}+1}\right)
j+\sum_{i=1,...,s}w_{i}\left( x_{j_{1}+1},...,x_{j_{t}+1}\right) \left(
i+j\right) x_{2}^{\left[ \frac{i+j}{p}\right] } \\
&&-\sum_{i=1,...,s}w_{i}\left( x_{j_{1}+1},...,x_{j_{t}+1}\right) ix_{2}^{ 
\left[ \frac{i}{p}\right] }\text{.}
\end{eqnarray*}%
As $w^{\mu }=\sum_{i=1,...,s}w_{i}\left( x_{j_{1}+1},...,x_{j_{t}+1}\right)
ix_{2}^{\left[ \frac{i}{p}\right] }\in \mathcal{K}$, we conclude that for $%
1\leq j\leq p-1$, 
\begin{equation*}
V_{j}=w_{0}\left( x_{j_{1}+1},...,x_{j_{t}+1}\right)
j+\sum_{i=1,...,s}w_{i}\left( x_{j_{1}+1},...,x_{j_{t}+1}\right) \left(
i+j\right) x_{2}^{\left[ \frac{i+j}{p}\right] }\in \mathcal{K}\text{.}
\end{equation*}%
For each $j$, either $V_{j}=0$ or $\delta \left( V_{j}\right) \geq 1$.

(\textbf{2.1}) Suppose for some $j$, $V_{j}\not=0$. Then 
\begin{eqnarray*}
\delta \left( w\right) &=&\sum_{j\not=1}\delta _{j}\left( w\right) +s\leq
\delta \left( V_{j}\right) \leq \sum_{j\not=1}\delta _{j}\left( w\right) + 
\left[ \frac{s+j}{p}\right] \text{,} \\
s &\leq &\left[ \frac{s+j}{p}\right] .
\end{eqnarray*}%
On writing $s=s_{0}+s_{1}p$ where $0\leq s_{0}\leq p-1$, we conclude 
\begin{equation*}
s_{0}+s_{1}p\leq \left[ \frac{s_{0}+s_{1}p+j}{p}\right] \leq \left[ \frac{%
s_{0}-1+\left( s_{1}+1\right) p}{p}\right] ;
\end{equation*}%
\begin{equation*}
\text{if }s_{0}=0\text{ then }s_{1}p\leq s_{1}\text{ and }s_{1}=0\text{
(impossible),}
\end{equation*}%
\begin{eqnarray*}
\text{if }s_{0} &\not=&0\text{ then }s_{0}+s_{1}p\leq s_{1}+1, \\
s_{1}\left( p-1\right) &\leq &1-s_{0}\leq 0\text{ and }s_{1}=0,s_{0}=1\text{.%
}
\end{eqnarray*}%
Hence, $s=1$; by (1.1), we have a contradiction.

(\textbf{2.2}) Suppose 
\begin{equation*}
V_{j}=w_{0}\left( x_{j_{1}+1},...,x_{j_{t}+1}\right)
j+\sum_{i=1,...,s}w_{i}\left( x_{j_{1}+1},...,x_{j_{t}+1}\right) \left(
i+j\right) x_{2}^{\left[ \frac{i+j}{p}\right] }=0
\end{equation*}%
for all $j$. Then 
\begin{equation*}
\left( V_{j}\right) ^{\alpha ^{-1}}=w_{0}\left(
x_{j_{1}},...,x_{j_{t}}\right) j+\sum_{i=1,...,s}w_{i}\left(
x_{j_{1}},...,x_{j_{t}}\right) \left( i+j\right) x_{1}^{\left[ \frac{i+j}{p}%
\right] p}=0
\end{equation*}%
for all $j$.

In particular, 
\begin{eqnarray*}
\left( V_{p-1}\right) ^{\alpha ^{-1}} &=&w_{0}\left(
x_{j_{1}},...,x_{j_{t}}\right) \left( p-1\right)
+\sum_{i=1,...,s}w_{i}\left( x_{j_{1}},...,x_{j_{t}}\right) \left(
i+p-1\right) x_{1}^{\left[ \frac{i+p-1}{p}\right] p} \\
&=&-w_{0}\left( x_{j_{1}},...,x_{j_{t}}\right) \\
&&+\left( 
\begin{array}{c}
w_{2}\left( x_{j_{1}},...,x_{j_{t}}\right) +2w_{3}\left(
x_{j_{1}},...,x_{j_{t}}\right) +... \\ 
+\left( p-2\right) w_{p-1}\left( x_{j_{1}},...,x_{j_{t}}\right)%
\end{array}%
\right) x_{1}^{p}+...=0
\end{eqnarray*}%
and therefore, $w_{0}\left( x_{j_{1}},...,x_{j_{t}}\right) =0$; a
contradiction.
\end{proof}

\subsubsection{Representation on the $p^{2}$-tree}

We index the $p^{2}$-tree by sequences from the set of strings $ij$ where $%
0\leq i,j\leq p-1$ and choose the following transversal for $H$: 
\begin{equation*}
T=\left\{ x_{1}^{i}a^{j}\mid 0\leq i,j\leq p-1\right\} \text{;}
\end{equation*}%
we will indicate $x_{1}^{i}a^{j}$ by $ij$.

\textbf{(1) Permutation representation. }Given $g\in G$, we denote the
permutation induced by $g$ on the transversal $T$ by $g^{\sigma }$. Then, 
\begin{eqnarray*}
a^{\sigma } &:&x_{1}^{i}a^{j}\rightarrow x_{1}^{i}a^{j+1}\text{ if }0\leq
j\leq p-2, \\
x_{1}^{i}a^{p-1} &\rightarrow &x_{1}^{i}, \\
a^{\sigma } &:&ij\rightarrow i\left( j+1\right) \text{ if }0\leq j\leq p-2%
\text{, } \\
i\left( p-1\right) &\rightarrow &i0;
\end{eqnarray*}%
\begin{eqnarray*}
\left( x_{1}\right) ^{\sigma } &:&x_{1}^{i}a^{j}\rightarrow x_{1}^{i+1}a^{j}%
\text{ if }0\leq i\leq p-2, \\
x_{1}^{p-1}a^{j} &\rightarrow &a^{j}, \\
ij &\rightarrow &\left( i+1\right) j\text{ if }0\leq i\leq p-2\text{, } \\
\left( p-1\right) j &\rightarrow &0j;
\end{eqnarray*}%
\begin{eqnarray*}
\left( x_{l}\right) ^{\sigma } &:&x_{1}^{i}a^{j}\rightarrow x_{1}^{i}a^{j}%
\text{ for }0\leq i\leq p-1,2\leq l\leq d, \\
ij &\rightarrow &ij\text{.}
\end{eqnarray*}

\textbf{(2)} \textbf{Cofactors (or Schreir elements) of actions on }$T$.

We calculate%
\begin{eqnarray*}
\text{ cofactors of }a &\text{:}&\text{ } \\
\text{ }\left( x_{1}^{i}a^{j}.a\right) \left( x_{1}^{i}a^{j+1}\right) ^{-1}
&=&1\text{ if }0\leq j\leq p-2, \\
\left( x_{1}^{i}a^{p-1}.a\right) \left( x_{1}^{i}\right) ^{-1} &=&1\text{;}
\end{eqnarray*}%
\begin{eqnarray*}
\text{ cofactors of }x_{1} &\text{:}& \\
\left( x_{1}^{i}a^{j}.x_{1}\right) \left( x_{1}^{i+1}a^{j}\right) ^{-1}
&=&x_{1}^{i}a^{j}x_{1}a^{-j}x_{1}^{-i-1}=\left( a^{j}\right)
^{x_{1}^{-i}}\left( a^{-j}\right) ^{x_{1}^{-\left( i+1\right) }}=\left(
a^{j}\right) ^{\left( x_{1}^{-i}-x_{1}^{-\left( i+1\right) }\right) } \\
&=&\left( a^{j}\right) ^{x_{1}^{-i}\left( 1-x_{1}^{-1}\right) }\text{ if }%
0\leq i\leq p-2, \\
\left( x_{1}^{p-1}a^{j}.x_{1}\right) a^{-j}
&=&x_{1}^{p-1}a^{j}x_{1}a^{-j}=x_{1}^{p}\left( a^{j}\right) ^{x-1}=\left(
a^{j}\right) ^{\left( x-1\right) x_{1}^{-p}}x_{1}^{p}\text{;}
\end{eqnarray*}%
\begin{eqnarray*}
\text{ cofactors of }x_{l}\text{ (}2 &\leq &l\leq d\text{) : } \\
\left( x_{1}^{i}a^{j}.x_{l}\right) \left( x_{1}^{i}a^{j}\right) ^{-1}
&=&x_{1}^{i}a^{j}x_{l}a^{-j}x_{1}^{-i}=\left( a^{j}\right)
^{x_{1}^{-i}\left( x_{l}-1\right) x_{l}^{-1}}x_{l}\text{.}
\end{eqnarray*}%
\textbf{(3)} \textbf{Images of cofactors under} $f$.

We calculate%
\begin{equation*}
f:\left( a^{j}\right) ^{x_{1}^{-i}\left( 1-x_{1}^{-1}\right) }\rightarrow
\left( a^{j}\right) ^{x_{2}^{-1}},
\end{equation*}%
\begin{equation*}
f:\left( a^{j}\right) ^{\left( x-1\right) x_{1}^{-p}}x_{1}^{p}\rightarrow
\left( a^{j}\right) ^{x_{2}^{-1}}x_{2};
\end{equation*}%
\begin{equation*}
f:\left( a^{j}\right) ^{x_{1}^{-i}\left( x_{l}-1\right)
x_{l}^{-1}}x_{l}\rightarrow \left( a^{j}\right)
^{-ix_{2}^{-1}x_{l+1}^{-1}\left( x_{l+1}-1\right) }x_{l+1}
\end{equation*}%
Therefore, 
\begin{eqnarray*}
\left( x_{1}^{\varphi }\right) _{ij} &=&\left( \left( a^{j}\right)
^{x_{2}^{-1}}\right) ^{\varphi }\text{ for }0\leq i\leq p-2 \\
&=&\left( \left( a^{j}\right) ^{x_{2}^{-1}}x_{2}\right) ^{\varphi }~\text{%
for }i=p-1
\end{eqnarray*}%
\begin{equation*}
\left( x_{l}^{\varphi }\right) _{ij}=\left( \left( a^{j}\right)
^{-ix_{2}^{-1}x_{l+1}^{-1}\left( x_{l+1}-1\right) }x_{l+1}\right) ^{\varphi }%
\text{ for }0\leq i\leq p-1\text{.}
\end{equation*}%
On identifying $a^{\varphi }$ with $a$, $x_{1}^{\varphi }$ with $x_{1}$ and $%
x_{l}^{\varphi }$ with $x_{l}$ we obtain%
\begin{eqnarray*}
\left( x_{1}\right) _{ij} &=&\left( a^{j}\right) ^{x_{2}^{-1}}\text{ for }%
0\leq i\leq p-2 \\
&=&\left( a^{j}\right) ^{x_{2}^{-1}}x_{2}~\text{for }i=p-1,
\end{eqnarray*}%
\begin{equation*}
\left( x_{l}\right) _{ij}=\left( a^{-ij}\right)
^{x_{2}^{-1}x_{l+1}^{-1}\left( x_{l+1}-1\right) }x_{l+1}\text{ for }0\leq
i\leq p-1\text{.}
\end{equation*}

\subsection{Proof of Theorem 5}

Let $p=2$ and re-index the $4$-tree by sequences from $\left\{
0,1,2,3\right\} $. Then the above representation becomes 
\begin{eqnarray*}
a &=&\left( 0,1\right) \left( 2,3\right) , \\
x_{1} &=&\left( 1,a^{x_{2}^{-1}},x_{2},a^{x_{2}^{-1}}x_{2}\right) \left(
0,2\right) \left( 1,3\right) , \\
x_{2} &=&\left(
x_{1},x_{1},x_{1},a^{x_{2}^{-1}+x_{1}^{-1}x_{2}^{-1}}x_{1}\right) =\left(
1,1,1,a^{x_{2}^{-1}+x_{1}^{-1}x_{2}^{-1}}\right) \left( x_{1}\right)
^{\left( 1\right) }\text{.}
\end{eqnarray*}

\textbf{(1)} \textbf{Powers of }$x_{1},x_{2}$:

\begin{eqnarray*}
x_{1}^{2n} &=&\left(
x_{2}^{n},x_{2}^{n},x_{2}^{n},a^{x_{2}^{-1}+x_{2}^{-\left( n+1\right)
}}x_{2}^{n}\right) , \\
x_{1}^{2n+1} &=&\left( x_{2}^{n},a^{x_{2}^{-\left( n+1\right)
}}x_{2}^{n},x_{2}^{n+1},a^{x_{2}^{-1}}x_{2}^{n+1}\right) \left( 0,2\right)
\left( 1,3\right) ,
\end{eqnarray*}%
\begin{equation*}
x_{2}^{n}=\left(
x_{1}^{n},x_{1}^{n},x_{1}^{n},a^{x_{2}^{-1}+x_{1}^{-n}x_{2}^{-1}}x_{1}^{n}%
\right)
\end{equation*}%
for all $n.$

\textbf{\ } \textbf{(2)} \textbf{Conjugates of }$a$:%
\begin{eqnarray*}
a^{x_{1}^{2n}} &=&\left(
1,1,a^{x_{2}^{n-1}+x_{2}^{-1}},a^{x_{2}^{n-1}+x_{2}^{-1}}\right) \left(
0,1\right) \left( 2,3\right) , \\
a^{x_{1}^{2n+1}} &=&\left(
a^{x_{2}^{n}},a^{x_{2}^{n}},a^{x_{2}^{-1}},a^{x_{2}^{-1}}\right) \left(
0,1\right) \left( 2,3\right) ,
\end{eqnarray*}%
\begin{equation*}
a^{x_{2}^{n}}=\left(
1,1,a^{x_{1}^{n}x_{2}^{-1}+x_{2}^{-1}},a^{x_{1}^{n}x_{2}^{-1}+x_{2}^{-1}}%
\right) \left( 0,1\right) \left( 2,3\right)
\end{equation*}%
\begin{eqnarray*}
a^{x_{1}^{2n}x_{2}^{l}} &=&\left(
1,1,a^{x_{1}^{l}x_{2}^{n-1}+x_{2}^{-1}},a^{x_{1}^{l}x_{2}^{n-1}+x_{2}^{-1}}%
\right) \left( 0,1\right) \left( 2,3\right) , \\
a^{x_{1}^{2n+1}x_{2}^{l}} &=&\left(
a^{x_{1}^{l}x_{2}^{n}},a^{x_{1}^{l}x_{2}^{n}},a^{x_{2}^{-1}},a^{x_{2}^{-1}}%
\right) \left( 0,1\right) \left( 2,3\right)
\end{eqnarray*}%
for all $n,l.$

\textbf{(3)} \textbf{Products of Conjugates of} $a$

Let $0<n_{1}<n_{2}<...<n_{2s}$. Then%
\begin{eqnarray*}
a^{x_{1}^{2n_{1}}+x_{1}^{2n_{2}}+...+x_{1}^{2n_{2s}}} &=&\left(
1,1,a^{\left( x_{2}^{n_{1}-1}+x_{2}^{n_{2}-1}...+x_{2}^{n_{s}-1}\right)
},a^{\left( x_{2}^{n_{1}-1}+x_{2}^{n_{2}-1}...+x_{2}^{n_{s}-1}\right)
}\right) , \\
a^{x_{1}^{2n_{1}}+x_{1}^{2n_{2}}+...+x_{1}^{2n_{2s+1}}} &=&\left(
1,1,a^{\left(
x_{2}^{n_{1}-1}+x_{2}^{n_{2}-1}+...+x_{2}^{n_{s+1}-1}+x_{2}^{-1}\right)
},a^{\left(
x_{2}^{n_{1}-1}+x_{2}^{n_{2}-1}+...+x_{2}^{n_{s+1}-1}+x_{2}^{-1}\right)
}\right) \\
&&.\left( 0,1\right) \left( 2,3\right) , \\
a^{1+x_{1}^{2n_{1}}+x_{1}^{2n_{2}}+...+x_{1}^{2n_{2s+1}}} &=&\left(
1,1,a^{\left(
x_{2}^{-1}+x_{2}^{n_{1}-1}+...+x_{2}^{n_{s}-1}+x_{2}^{n_{s+1}-1}\right)
},a^{\left(
x_{2}^{-1}+x_{2}^{n_{1}-1}+...+x_{2}^{n_{s}-1}+x_{2}^{n_{s+1}-1}\right)
}\right) \text{.}
\end{eqnarray*}

\textbf{(4)} \textbf{States of the automaton }$x_{1}=\left(
1,a^{x_{2}^{-1}},x_{2},a^{x_{2}^{-1}}x_{2}\right) \left( 0,2\right) \left(
1,3\right) $

Using the above equations together with 
\begin{eqnarray*}
a^{x_{1}^{2n}x_{2}^{m}}x_{1} &=&\left(
a^{x_{2}^{-1}},1,a^{x_{1}^{m}x_{2}^{n-1}}x_{2},a^{x_{1}^{m}x_{2}^{n-1}+x_{2}^{-1}}x_{2}\right) \left( 0,3\right) \left( 1,2\right) ,
\\
a^{x_{1}^{2n+1}x_{2}^{m}}x_{1} &=&\left(
a^{x_{1}^{m}x_{2}^{n}}a^{x_{2}^{-1}},a^{x_{1}^{m}x_{2}^{n}},x_{2},a^{x_{2}^{-1}}x_{2}\right) \left( 0,3\right) \left( 1,2\right) 
\text{,}
\end{eqnarray*}%
we calculate the states of $x_{1}$: 
\begin{eqnarray*}
a^{x_{2}^{-1}} &=&\left(
1,1,a^{x_{1}^{-1}x_{2}^{-1}+x_{2}^{-1}},a^{x_{1}^{-1}x_{2}^{-1}+x_{2}^{-1}}%
\right) \left( 0,1\right) \left( 2,3\right) \\
a^{x_{2}^{-1}}x_{2} &=&\left(
x_{1},x_{1},x_{1},a^{x_{1}^{-1}x_{2}^{-1}+x_{2}^{-1}}x_{1}\right) \left(
0,1\right) \left( 2,3\right) , \\
a^{x_{2}^{-1}+x_{1}^{-1}x_{2}^{-1}} &=&\left(
a^{x_{1}^{-1}x_{2}^{-1}},a^{x_{1}^{-1}x_{2}^{-1}},a^{x_{1}^{-1}x_{2}^{-1}},a^{x_{1}^{-1}x_{2}^{-1}}\right)
\\
a^{x_{2}^{-1}+x_{1}^{-1}x_{2}^{-1}}x_{1} &=&\left(
a^{x_{1}^{-1}x_{2}^{-1}},a^{x_{1}^{-1}x_{2}^{-1}+x_{2}^{-1}},a^{x_{1}^{-1}x_{2}^{-1}}x_{2},a^{x_{1}^{-1}x_{2}^{-1}+x_{2}^{-1}}x_{2}\right) \left( 0,2\right) \left( 1,3\right)
\end{eqnarray*}%
\begin{eqnarray*}
a^{x_{1}^{-1}x_{2}^{-1}} &=&\left(
a^{x_{1}^{-1}x_{2}^{-1}},a^{x_{1}^{-1}x_{2}^{-1}},a^{x_{2}^{-1}},a^{x_{2}^{-1}}\right) \left( 0,1\right) \left( 2,3\right)
\\
a^{x_{1}^{-1}x_{2}^{-1}}x_{2} &=&\left(
a^{x_{1}^{-1}x_{2}^{-1}}x_{1},a^{x_{1}^{-1}x_{2}^{-1}}x_{1},a^{x_{1}^{-1}x_{2}^{-1}}x_{1},a^{x_{2}^{-1}}x_{1}\right) \left( 0,1\right) \left( 2,3\right)
\\
a^{x_{2}^{-1}+x_{1}^{-1}x_{2}^{-1}}x_{2} &=&\left(
a^{x_{1}^{-1}x_{2}^{-1}}x_{1},a^{x_{1}^{-1}x_{2}^{-1}}x_{1},a^{x_{1}^{-1}x_{2}^{-1}}x_{1},a^{x_{2}^{-1}}x_{1}\right)
\end{eqnarray*}%
\begin{eqnarray*}
a^{x_{1}^{-1}x_{2}^{-1}}x_{1} &=&\left(
a^{x_{1}^{-1}x_{2}^{-1}+x_{2}^{-1}},a^{x_{1}^{-1}x_{2}^{-1}},x_{2},a^{x_{2}^{-1}}x_{2}\right) \left( 0,3\right) \left( 1,2\right)
\\
a^{x_{2}^{-1}}x_{1} &=&\left(
a^{x_{2}^{-1}},1,a^{x_{1}^{-1}x_{2}^{-1}}x_{2},a^{x_{1}^{-1}x_{2}^{-1}+x_{2}^{-1}}x_{2}\right) \left( 0,3\right) \left( 1,2\right) 
\text{.}
\end{eqnarray*}%
Therefore $x_{1}$ has in total $12$ states: 
\begin{equation*}
\left\{ 
\begin{array}{c}
e,x_{1},x_{2}, \\ 
a^{x_{2}^{-1}},a^{x_{2}^{-1}}x_{1},a^{x_{2}^{-1}}x_{2}, \\ 
a^{x_{1}^{-1}x_{2}^{-1}},a^{x_{1}^{-1}x_{2}^{-1}}x_{1},a^{x_{1}^{-1}x_{2}^{-1}}x_{2},
\\ 
a^{x_{1}^{-1}x_{2}^{-1}+x_{2}^{-1}},a^{x_{2}^{-1}+x_{1}^{-1}x_{2}^{-1}}x_{1},a^{x_{1}^{-1}x_{2}^{-1}+x_{2}^{-1}}x_{2}%
\end{array}%
\right\} \text{.}
\end{equation*}%
Since $x_{2}$ is a state of $x_{1}$ it follows that $G$ is a finite state
group.

\textbf{(5)} \textbf{Incidence matrix for the graph of states of }$x_{1}$.

Rewrite the states of \textbf{\ }$x_{1}$ as%
\begin{eqnarray*}
s_{1} &=&x_{1},s_{2}=x_{2}, \\
s_{3} &=&a^{x_{2}^{-1}},s_{4}=a^{x_{2}^{-1}}x_{1},s_{5}=a^{x_{2}^{-1}}x_{2},
\\
s_{6}
&=&a^{x_{1}^{-1}x_{2}^{-1}},s_{7}=a^{x_{1}^{-1}x_{2}^{-1}}x_{1},s_{8}=a^{x_{1}^{-1}x_{2}^{-1}}x_{2},
\\
s_{9}
&=&a^{x_{1}^{-1}x_{2}^{-1}+x_{2}^{-1}},s_{10}=a^{x_{1}^{-1}x_{2}^{-1}+x_{2}^{-1}}x_{1},s_{11}=a^{x_{1}^{-1}x_{2}^{-1}+x_{2}^{-1}}x_{2}%
\text{. }
\end{eqnarray*}%
The incidence matrix of the automaton $x_{1}$ reads as follows:%
\begin{equation*}
\left( 
\begin{array}{ccccccccccccc}
& e & s_{1} & s_{2} & s_{3} & s_{4} & s_{5} & s_{6} & s_{7} & s_{8} & s_{9}
& s_{10} & s_{11} \\ 
e & 4 &  &  &  &  &  &  &  &  &  &  &  \\ 
s_{1} &  & 1 & 1 & 1 &  & 1 &  &  &  &  &  &  \\ 
s_{2} &  & 3 &  &  &  &  &  &  &  &  & 1 &  \\ 
s_{3} & 2 &  &  &  &  &  &  &  &  & 2 &  &  \\ 
s_{4} & 1 &  &  & 1 &  &  &  &  & 1 &  &  & 1 \\ 
s_{5} &  & 3 &  &  &  &  &  &  &  &  & 1 &  \\ 
s_{6} &  &  &  & 2 &  &  & 2 &  &  &  &  &  \\ 
s_{7} &  &  & 1 &  &  & 1 & 1 &  &  & 1 &  &  \\ 
s_{8} &  &  &  &  & 1 &  &  & 3 &  &  &  &  \\ 
s_{9} &  &  &  &  &  &  & 4 &  &  &  &  &  \\ 
s_{10} &  &  &  &  &  &  & 1 &  & 1 & 1 &  & 1 \\ 
s_{11} &  &  &  &  & 1 &  &  & 3 &  &  &  & 
\end{array}%
\right) \text{.}
\end{equation*}

.

.


\begin{thebibliography}{99}
\bibitem{KaimVersh} Kaimanovich, V., Vershik, A., Random walks on discrete
groups: Boundary and Entropy, The Annals of Probability, vol. 11, (1983),
457-490.

\bibitem{GrigZuk} Grigorchuk, R., Zuk, A., The Lamplighter group as a group
generated by a 2-state automaton and its spectrum. Geometriae Dedicata 87,
(2001), 209-244.

\bibitem{GLSZ} Grigorchuk, R., Linnel, P., Schick, Th., and Zuk, A., On a
conjecture of Atiyah. C.R. Acad. Sci. Paris 331, Serie I, (2000), 663-668.

\bibitem{SilvaStein} Silva, P., Steinberg, B., On a class of automata groups
generalizing Lamplighter groups, Int. J. Algebra Comut. 15 (2005), 1213-1234.

\bibitem{KSS} Kambides, M., Silva, P., Steinberg, B., The spectra of
Lamplighter groups and Cayley machines, Geom. Dedicata 120, (2006), 193-227.

\bibitem{BarthSunik} Bartholdi, L., Sunik, Z., Some solvable automata
groups, Contemp. Math. 394, (2006), 11-30.

\bibitem{BonDaRo} Bondarenko, I., D'Angeli, D., Rodaro, E., The Lamplighter
group $Z_{3}\wr Z$ generated by a bireversible automaton, arXiv:1502.07981

\bibitem{GLN} Grigorchuk, R., Leemann, P-H, Nagnibeda, T., Lamplighter
groups, de Bruijn graphs, spider-web graphs and their spectra,
arXiv:1502.06722.

\bibitem{Sidki} Sidki, S., Tree wreathing applied to the generation of
groups by finite automata, International Journal of Algebra and Computation,
vol. 15. (2005), 1-8.

\bibitem{NekSid} Nekrachevych,V., Sidki, S., Automorphisms of the binary
tree: state-closed subgroups and dynamics of 1/2 endomorphisms, In: Groups:
London Mathematical Lecture Notes Series, Vol. 311, (2004), 375-404.
Topological, Combinatorial and Arithmetic Aspects, Muller, T. W., (Ed.)

\bibitem{Nek} Nekrashevych, V., Self-similar groups, Math. Surveys and
Monographs, 117, American Mathematical Society, Providence, RI, 2005.

\bibitem{BruSid} Brunner, A., Sidki, S., Abelian state-closed subgroups of
automorphisms of m-ary trees, Groups, Geometry and Dynamics, vol.3, (2010)
455-472.

\bibitem{BerSid} Berlatto, A., Sidki, S., Virtual endomorphisms of nilpotent
groups, Groups, Geometry and Dynamics, vol.1, (2007), 21-46.

\bibitem{Kapo} Kapovich, M., Arithmetic aspects of self-similar groups,
Groups Geometry and Dynamics 6 (2012), 737--754.

\bibitem{MuntSav} Y. Muntyan, D. Savchuk, AutomGrp -- GAP package for
computations in self-similar groups and semigroups, Version 1.2.4, 2014.
\end{thebibliography}
\end{document}